\documentclass[12pt,a4paper]{amsart}
\usepackage{amsmath,amstext,amssymb,amsthm,amsfonts}
\usepackage{xcolor}

\newcommand{\nc}{\newcommand}

\nc{\one}{\mbox{\bf 1}}
\nc{\invtensor}{\underset{\leftarrow}{\otimes}}
\nc{\const}{\operatorname{const}}

\nc{\Ad}{\operatorname{Ad}}
\nc{\ad}{\operatorname{ad}}
\nc{\ev}{\operatorname{ev}}
\nc{\tr}{\operatorname{tr}}

\nc{\Gr}{\mathscr{K}}
\nc{\rGr}{\operatorname{rGr}}
\nc{\atyp}{\operatorname{atyp}}
\nc{\tw}{\operatorname{tw}}
\nc{\rank}{\operatorname{rank}}
\nc{\corank}{\operatorname{corank}}
\nc{\codim}{\operatorname{codim}}
\nc{\sdim}{\operatorname{sdim}}
\nc{\mult}{\operatorname{mult}}
\nc{\ds}{\operatorname{ds}}
\nc{\tail}{\operatorname{tail}}
\nc{\howl}{\operatorname{howl}}
\nc{\triv}{\operatorname{triv}}
\nc{\spn}{\operatorname{span}}
\nc{\Sym}{\operatorname{Sym}}
\nc{\Core}{\operatorname{Core}}
\nc{\id}{\operatorname{id}}
\nc{\Id}{\operatorname{Id}}
\nc{\Ree}{\operatorname{Re}}
\nc{\hi}{\operatorname{hi}}
\nc{\htt}{\operatorname{ht}}
\nc{\at}{\operatorname{at}}
\nc{\str}{\operatorname{str}}
\nc{\Iso}{\operatorname{Iso}}
\nc{\Ker}{\operatorname{Ker}}
\nc{\rker}{\operatorname{rKer}}
\nc{\im}{\operatorname{Im}}
\nc{\osp}{\mathfrak{osp}}
\nc{\sgn}{\operatorname{sgn}}
\nc{\F}{\operatorname{F}}
\nc{\Mod}{\operatorname{Mod}}
\nc{\DS}{\operatorname{DS}}
\nc{\Soc}{\operatorname{Soc}}
\nc{\Inj}{\operatorname{Inj}}
\nc{\Hom}{\operatorname{Hom}}
\nc{\End}{\operatorname{End}}
\nc{\supp}{\operatorname{supp}}
\nc{\smult}{\operatorname{smult}}
\nc{\Dyn}{\operatorname{Dyn}}
\nc{\Card}{\operatorname{Card}}
\nc{\Ann}{\operatorname{Ann}}
\nc{\Arc}{\operatorname{Arc}}
\nc{\arc}{\operatorname{arc}}
\nc{\Ind}{\operatorname{Ind}}
\nc{\Coind}{\operatorname{Coind}}
\nc{\wt}{\operatorname{wt}}
\nc{\hwt}{\operatorname{hwt}}
\nc{\rk}{\operatorname{rank}}
\nc{\ch}{\operatorname{ch}}
\nc{\sch}{\operatorname{sch}}
\nc{\mdim}{\operatorname{mdim}}
\nc{\Stab}{\operatorname{Stab}}
\nc{\defect}{\operatorname{defect}}

\nc{\Irr}{\operatorname{Irr}}
\nc{\Spec}{\operatorname{Spec}}
\nc{\Res}{\operatorname{Res}}
\nc{\res}{\operatorname{res}}
\nc{\Aut}{\operatorname{Aut}}
\nc{\Ext}{\operatorname{Ext}}
\nc{\Prec}{\operatorname{Prec}}
\nc{\Fract}{\operatorname{Fract}}
\nc{\gr}{\operatorname{gr}}
\nc{\vol}{\operatorname{vol}}
\nc{\diag}{\operatorname{diag}}
\nc{\deff}{\operatorname{def}}
\nc{\core}{\operatorname{core}}
\nc{\HC}{\operatorname{HC}}
\nc{\Ch}{\operatorname{Ch}}
\nc{\Sch}{\operatorname{Sch}}
\nc{\dpth}{\operatorname{dpth}}
\nc{\Pol}{\operatorname{Pol}}

\nc{\pari}{\operatorname{par}}
\nc{\pos}{\operatorname{pos}}

\nc{\Cl}{\mathcal{C}\ell}

\nc{\wdchi}{\widetilde{\chi}}
\nc{\wdH}{\widetilde{H}}
\nc{\wdN}{\widetilde{N}}
\nc{\wdM}{\widetilde{M}}
\nc{\wdO}{\widetilde{O}}
\nc{\wdR}{\widetilde{R}}

\nc{\wdV}{\widetilde{V}}

\nc{\wdC}{\widetilde{C}}

\nc{\zero}{\operatorname{zero}}
\nc{\nonzero}{\operatorname{nonzero}}

\nc{\Nonzero}{\operatorname{Nonzero}}

\nc{\diam}{\operatorname{diam}}

\nc{\Obj}{\operatorname{Obj}}
\nc{\Dglie}{\operatorname{{\mathcal D}glie}}
\nc{\Fin}{\operatorname{{\mathcal F}in}}
\nc{\pr}{\operatorname{pr}}
\nc{\Adm}{\operatorname{\mathcal{A}dm}}

\nc{\fg}{\mathfrak g}

\nc{\Sg}{{\cS(\fg)}}
\nc{\Shg}{{\cS(\fhg)}}
\nc{\Ug}{{\cU(\fg)}}
\nc{\Uhg}{{\cU(\fhg)}}
\nc{\Sh}{{\cS(\fh)}}
\nc{\Uh}{{\cU(\fh)}}
\nc{\Uhh}{{\cU(\fhh)}}
\nc{\Zg}{{{\mathcal{Z}}(\fg)}}

\nc{\Vir}{{\mathcal{V}ir}}
\nc{\NS}{{\mathcal{N}S}}

\newcommand{\Sk}{Sk}
\newcommand{\Sp}{Sp}

\newcommand{\ree}{re}
\newcommand{\ima}{im}
\newcommand{\is}{iso}

\nc{\Up}{{\mathcal U}({\mathfrak p})}
\nc{\Ah}{{\mathcal A}({\mathfrak h})}
\nc{\Ag}{{\mathcal A}({\mathfrak g})}
\nc{\Ap}{{\mathcal A}({\mathfrak p})}
\nc{\Zp}{{\mathcal Z}({\mathfrak p})}
\nc{\cR}{\mathcal R}
\nc{\cS}{\mathcal S}
\nc{\cP}{\mathcal P}
\nc{\cT}{\mathcal{T}}
\nc{\CC}{\mathcal C}
\nc{\cA}{\mathcal A}
\nc{\cV}{\mathcal V}
\nc{\cU}{\mathcal U}
\nc{\cZ}{\mathcal Z}
\nc{\cM}{\mathcal M}
\nc{\cL}{\mathcal L}
\nc{\cF}{\mathcal F}

\nc{\cB}{\mathcal{B}}

\nc{\fo}{\mathfrak o}
\nc{\fspo}{\mathfrak{spo}}
\nc{\fa}{\mathfrak a}

\nc{\CO}{\mathcal O}
\nc{\CR}{\mathcal R}

\nc{\cK}{\mathcal{K}}
\nc{\cW}{\mathcal{W}}
\nc{\bM}{\mathbf{M}}
\nc{\bL}{\mathbf{L}}
\nc{\bN}{\mathbf{N}}

\nc{\zq}{\mathpzc q}

\nc{\fl}{\mathfrak l}
\nc{\fn}{\mathfrak n}
\nc{\fm}{\mathfrak m}
\nc{\fp}{\mathfrak p}
\nc{\fh}{\mathfrak h}
\nc{\ft}{\mathfrak t}
\nc{\fk}{\mathfrak k}
\nc{\fb}{\mathfrak b}
\nc{\fs}{\mathfrak s}
\nc{\fB}{\mathfrak B}

\def\F{{\mathbb F}}

\newcommand{\bt}{\begin{theorem}}
\newcommand{\et}{\end{theorem}}

\newcommand{\be}{\begin{equation}}
\newcommand{\ee}{\end{equation}}

\newcommand{\bpf}{\begin{proof}}
\newcommand{\epf}{\end{proof}}

\newcommand{\bi}{\begin{enumerate}}
\newcommand{\ei}{\end{enumerate}}
\newcommand{\bl}{\begin{lem}}
\newcommand{\el}{\end{lem}}

\nc{\vareps}{\varepsilon}
\nc{\varesp}{\varepsilon}
\nc{\veps}{\varepsilon}

\nc{\fsl}{\mathfrak{sl}}
\nc{\fgl}{\mathfrak{gl}}
\nc{\fso}{\mathfrak{so}}
\nc{\fosp}{\mathfrak{osp}}
\nc{\fsp}{\mathfrak{sp}}
\nc{\fq}{\mathfrak q}
\nc{\fsq}{\mathfrak{sq}}
\nc{\fpsq}{\mathfrak{psq}}
\nc{\fpq}{\mathfrak{pq}}

\nc{\fhg}{\hat{\fg}}
\nc{\fhn}{\hat{\fn}}
\nc{\fhh}{\hat{\fh}}
\nc{\fhb}{\hat{\fb}}
\nc{\hrho}{\hat{\rho}}

\nc{\hsl}{\hat{\fsl}}
\nc{\fpo}{\mathfrak{po}}
\nc{\dirlim}{\underset{\rightarrow}{\lim}\,}
\nc{\nen}{\newenvironment}
\nc{\ol}{\overline}
\nc{\ul}{\underline}
\nc{\ra}{\rightarrow}
\nc{\lra}{\longrightarrow}
\nc{\Lra}{\Longrightarrow}
\nc{\bo}{\bar{1}}
\nc{\Lla}{\Longleftarrow}

\nc{\Llra}{\Longleftrightarrow}

\nc{\thla}{\twoheadleftarrow}

\nc{\lang}{(}
\nc{\rang}{)}

\nc{\hra}{\hookrightarrow}
\nc{\simO}{\overset{\CO}{\sim}}
\nc{\iso}{\overset{\sim}{\lra}}

\nc{\ssubset}{\underset{\not=}{\subset}}

\nc{\vac}{|0\rangle}

\nc{\simka}{{\ \scriptscriptstyle _{{\sim}}^\text{\tiny{k}}\ }}

\nc{\Thm}[1]{Theorem~\ref{#1}}
\nc{\Prop}[1]{Proposition~\ref{#1}}
\nc{\Lem}[1]{Lemma~\ref{#1}}
\nc{\Cor}[1]{Corollary~\ref{#1}}
\nc{\Conj}[1]{Conjecture~\ref{#1}}
\nc{\Claim}[1]{Claim~\ref{#1}}
\nc{\Defn}[1]{Definition~\ref{#1}}
\nc{\Exa}[1]{Example~\ref{#1}}
\nc{\Rem}[1]{Remark~\ref{#1}}
\nc{\Note}[1]{Note~\ref{#1}}
\nc{\Quest}[1]{Question~\ref{#1}}
\nc{\Hyp}[1]{Hypoth\`ese~\ref{#1}}
\nen{thm}[1]{\label{#1}{\bf Theorem.\ } \em}{}
\nen{prop}[1]{\label{#1}{\bf Proposition.\ } \em}{}
\nen{lem}[1]{\label{#1}{\bf Lemma.\ } \em}{}
\nen{cor}[1]{\label{#1}{\bf Corollary.\ } \em}{}
\nen{conj}[1]{\label{#1}{\bf Conjecture.\ } \em}{}

\nen{claim}[1]{\label{#1}{\bf Claim.\ } \em}{}

\nen{defn}[1]{\label{#1}{\bf Definition.\ } }{}
\nen{exa}[1]{\label{#1}{\bf Example.\ } }{}
\nen{rem}[1]{\label{#1}{\em Remark.\ } }{}
\nen{note}[1]{\label{#1}{\em Note.\ } }{}
\nen{exer}[1]{\label{#1}{\em Exercise.\ } }{}
\nen{sket}[1]{\label{#1}{\em Sketch of proof.\ } }{}
\nen{quest}[1]{\label{#1}{\bf Question.\ } \em}{}

\nen{hyp}[1]{\label{#1}{\bf Hypoth\`ese.\ } \em}{}
\begin{document}
\setcounter{section}{0}
\setcounter{tocdepth}{1}

\title[Irreducibility of Verma modules for symmetrizable Kac-Moody superalgebras]{Irreducibility of Verma modules over symmetrizable Kac--Moody superalgebras}
\author{ M. Gorelik}
%
%
%
%
%
%

\begin{abstract} We establish a criterion for the irreducibility of Verma modules over symmetrizable Kac--Moody superalgebras. Our main tool is the Weyl-Kac denominator identity.
\end{abstract}




\maketitle


\section{Introduction}

The present paper contributes to the representation theory of Kac--Moody superalgebras $\fg:=\fg(A,\tau)$, which are constructed from an $\ell\times\ell$  Cartan matrix $A$ and a subset $\tau\subseteq{1,\ldots,\ell}$ by a procedure analogous to that described in~\cite{Kbook}, Chapter I. 
These Lie superalgebras were introduced in~\cite{Ksuper}.
The subset $\tau$ determines the odd generators. Ordinary Kac--Moody algebras are recovered as the special case $\tau=\emptyset$. 

The precise definition of Kac--Moody superalgebras (see Section~\ref{sect:KM}) is more involved than that of Kac--Moody algebras. The main result of the paper is an irreducibility criterion for a Verma module over symmetrizable Kac-Moody superalgebra.

By  Hoyt-Serganova classification~\cite{Hoyt},\cite{Sint},
 indecomposable symmetrizable Kac--Moody superalgebras fall into three classes:
\begin{itemize}
\item[I.] finite-dimensional simple Lie algebras and their super analogues;

\item[II.] affine Lie superalgebras, which are subquotients of central extensions of twisted loop algebras associated with Lie superalgebras from Class I;

\item[III.] anisotropic symmetrizable Kac--Moody superalgebras $\fg(A,\tau)$ (studied in~\cite{K78}), for which 
$a_{ii}\neq 0$, 
$\frac{2a_{ij}}{a_{ii}}\in\mathbb Z_{\leq 0}$ if $i\neq j$,
and $\frac{a_{ij}}{a_{ii}}\in\mathbb Z$ whenever $i\in\tau$. 
\end{itemize}

A Verma module $M(\lambda)$  is irreducible if and only if all Shapovalov determinants $\det {S}_{\nu}$ are nonzero at $\lambda$. The Shapovalov determinants, introduced in~\cite{Shap}, are polynomials that admit linear factorization  (under our assumption that $\fg(A,\tau)$ is symmetrizable):
$$\det S_{\nu}=\prod_{m=1}^{\infty}\prod_{\alpha\in\Delta^+}
H_{m\alpha}^{d_{m\alpha}(\nu)},
$$
where $\Delta^+$ is the set of positive roots  and
$$\begin{array}{l}
H_{m\alpha}(\lambda):=(\lambda+\rho,m\alpha)-\frac{(m\alpha,m\alpha)}{2},\\
d_{m\alpha}(\nu):=(-1)^{(m+1)p(\alpha)}\dim\fg_\alpha K(\nu-m\alpha),
\end{array}$$
where $K:\mathbb{Z}\Delta\to\mathbb{Z}_{\geq 0}$ stands for  the Kostant partition function and $p(\alpha)$ stands for the parity of root.

One has  $(-1)^{(m+1)p(\alpha)}d_{m\alpha}(\nu)\geq 0$  
and this number is positive for sufficiently large $\nu$.

If $\fg$ is a Kac--Moody algebra, $p(\alpha)=0$.
This yields the well-known irreducibility criterion: the Verma module $M(\lambda)$ is irreducible if and only if
$$
H_{m\alpha}(\lambda)\not= 0
$$
for all pairs $(m,\alpha)$, see~\cite{Shap},\cite{KK}.

For Kac--Moody superalgebras, cancellations between factors may occur: if $\alpha$ is an odd root and $2\alpha$ is a root, then the same factor $H_{2m\alpha}$ arises from the pairs $(2m,\alpha)$ and $(m,2\alpha)$, with contributions of opposite signs. This leads naturally to the problem of determining the sign of
$$
\dim\fg_{2\alpha}-\dim\fg_\alpha
$$
for odd roots $\alpha$.

For a real root $\alpha$ one has 
$\dim\fg_\alpha=1$, with
$\dim\fg_{2\alpha}=1$ if $\alpha$ is  odd and non-isotropic, and $\dim\fg_{2\alpha}=0$ 
otherwise.
Thus, it remains to consider odd imaginary roots.
There are no imaginary roots in Class I, while in Class II all imaginary roots are even, with the exception of one series. For the odd imaginary roots in this exceptional series, we have
$1=\dim\fg_\alpha<\dim\fg_{2\alpha}$.

It remains, therefore, to treat Class III. In this case the root multiplicities are given only implicitly by the Weyl-Kac denominator identity. We use this  identity  as a tool for comparing root multiplicities: by comparing the identities for the ``sibling'' Kac--Moody superalgebras $\fg(A,\tau)$ and $\fg(A,\emptyset)$, we obtain
\begin{equation}\label{eq:1>2}
\dim\fg_{2\gamma}>\dim\fg_\gamma\ \text{for every odd imaginary root $\gamma$.}\end{equation}

Note that, by above, (\ref{eq:1>2}) holds for every symmetrizable Kac--Moody superalgebra. The symmetrizability assumption is essential here: the  inequality fails for the non-symmetrizable affine Kac--Moody superalgebra $\fq_n^{(2)}$, where  the minimal imaginary root $\delta$ is odd and
$\dim\fg_{2\delta}=\dim\fg_\delta=n-1$.

The  inequality (\ref{eq:1>2}) allows us to control the cancellations among the linear factors in the Shapovalov determinants and yields the following irreducibility criterion.

\subsection{}
\begin{cor}{cor:1}
Let $\fg$ be a symmetrizable Kac-Moody superalgebra.
A Verma module $M(\lambda)$ is irreducible if and only if
$$
2(\lambda+\rho,\alpha)\notin
\mathbb Z_{>0}(\alpha,\alpha)
$$
for every even positive real root $\alpha$\footnote{In our convention
for a real root $\alpha$, $\alpha/2$ is not a root}, 
$$
2(\lambda+\rho,\alpha)\notin
(2\mathbb Z_{\geq0}+1)(\alpha,\alpha)
$$
for every odd positive real root $\alpha$, and
$$2(\lambda+\rho,\alpha)\not=(\alpha,\alpha)
$$
 for every positive imaginary root $\alpha$.
\end{cor}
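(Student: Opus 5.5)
We derive the criterion from the Shapovalov factorization, assuming the inequality (\ref{eq:1>2}) $\dim\fg_{2\gamma}>\dim\fg_\gamma$ for every odd imaginary root $\gamma$. The Verma module $M(\lambda)$ is irreducible iff $\det S_\nu(\lambda)\neq 0$ for all $\nu$. After grouping, $\det S_\nu$ is a product of linear factors. Only the hyperplanes with a positive net exponent for some $\nu$ can vanish and force reducibility, since factors with negative net exponent divide into the others. So the plan is as follows. Fix a linear function $H=H_{m\alpha}$, up to proportionality. Collect all pairs $(m',\alpha')$ with $m'\alpha'=m\alpha$, which give the same factor. Compute the net exponent $\sum (-1)^{(m'+1)p(\alpha')}\dim\fg_{\alpha'}K(\nu-m\alpha)$. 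It has the same $K(\nu-m\alpha)$ factor throughout and is positive for large $\nu$ iff the coefficient $c_{m\alpha}:=\sum_{m'\alpha'=m\alpha}(-1)^{(m'+1)p(\alpha')}\dim\fg_{\alpha'}$ is positive. Note that $H_{m\alpha}=0$ reads $2(\lambda+\rho,\alpha)=m(\alpha,\alpha)$, and distinct $m\alpha$ give distinct hyperplanes unless both are isotropic and proportional. That case will be handled by noting that $H_{m\alpha}=m(\lambda+\rho,\alpha)$ then, so all multiples give the same hyperplane with exponents adding.

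The key step is the case analysis of $c_\beta$, $\beta=m\alpha$. If no root $\alpha'$ with $p(\alpha')=1$ and $2\alpha'$ a root divides $\beta$ appropriately, then all contributions are nonnegative. Indeed, odd $\alpha'$ with odd $m'$ contributes positively, and an even root contributes positively for all $m'$. An odd root with even $m'$ contributes negatively.

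Consider first real roots. If $\alpha$ is even real, then the pairs are $(m,\alpha)$ only, since $\alpha/2$ is not a root by convention. So $c=1>0$ for all $m\ge1$, which gives the first condition. If $\alpha$ is odd non-isotropic real, then $2\alpha$ is an even real root with $\dim=1$. For odd $m$, $c=1$. For $m=2k$, the pairs $(2k,\alpha)$ and $(k,2\alpha)$ cancel, giving $c=0$. This gives the second condition, and the even multiples are already covered by the even real root $2\alpha$ only if those are not cancelled. Since they are cancelled, one must note that the condition for $2\alpha$ is not listed separately: $2\alpha$ is even real with $\alpha$ a root, so the footnote convention excludes it from the first family. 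If $\alpha$ is odd isotropic real, then only $m=1$ gives a nonzero net contribution: $(1,\alpha)$ gives $+1$, while $(m,\alpha)$ for even $m$ gives $-1$ and for odd $m>1$ gives $+1$. All of these have the same hyperplane $(\lambda+\rho,\alpha)=0$, which is the condition $2(\lambda+\rho,\alpha)\in(2\mathbb Z_{\ge0}+1)(\alpha,\alpha)=\{0\}$. Since the exponents $K(\nu-m\alpha)$ now differ, I would check positivity directly. The alternating sum $\sum_m(-1)^{m+1}K(\nu-m\alpha)$ is $\ge0$ and is positive for suitable $\nu$; this follows from the standard fact that $\alpha$ can appear at most once in a partition because it is odd isotropic.

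For imaginary $\alpha$, at $m=1$ the only pairs are $(1,\alpha)$ and possibly $(1/2\ldots)$. These are excluded, or we can choose $\alpha$ to be the minimal root on its ray among those with a positive coefficient. We need $c_\alpha>0$ for every positive imaginary $\alpha$. Writing $\alpha=k\gamma_0$ with $\gamma_0$ primitive, $c_\alpha$ is a sum over divisors. The negative terms come from $(2j,\alpha/(2j))$ with $\alpha/2j$ odd. I would pair each such negative term with the term $(j,\alpha/j)$, whose root $\alpha/j=2(\alpha/2j)$ is even. Inequality (\ref{eq:1>2}) makes each pair strictly positive, and the remaining terms are nonnegative, so $c_\alpha>0$. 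Hence the condition $H_{m\alpha}=0$ for all $m$ reduces to the single condition $2(\lambda+\rho,\alpha)=(\alpha,\alpha)$, ranging over all imaginary $\alpha$, because $H_{m\alpha}$ is the $m=1$ hyperplane of the root $m\alpha$ whenever $m\alpha$ is a root. When $m\alpha$ is not a root, the same pairing argument shows $c_{m\alpha}\ge0$ with zero hyperplane contributions only, or the hyperplane equals that of the root.

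Conversely, one has to check that a hyperplane with $c=0$ cannot cause vanishing. This is immediate because the factor is absent from $\det S_\nu$. The main obstacle is the bookkeeping showing that the pairing is injective. The map $(2j,\alpha/2j)\mapsto(j,\alpha/j)$ is injective, but an even root $\alpha/j$ may be paired only once, which is fine. The remaining delicate point is the isotropic imaginary case, where the hyperplanes of all multiples coincide. There I would use that the imaginary isotropic hyperplane $(\lambda+\rho,\alpha)=0$ coincides with $2(\lambda+\rho,\alpha)=(\alpha,\alpha)=0$, and the positivity of the $m=1$ term, together with the pairing at each level, ensures that the total exponent is positive for large $\nu$.
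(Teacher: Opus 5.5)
Your bookkeeping is the paper's argument in Section~4. You group the factors by the vector $m'\alpha'$ and cancel $(2k,\alpha)$ against $(k,2\alpha)$ for odd non-isotropic real $\alpha$. For odd isotropic $\alpha$, you turn the alternating sum into a count of PBW monomials containing $e_\alpha$. For imaginary roots, you pair each negative term $(2j,\gamma)$, $\gamma$ odd, with $(j,2\gamma)$, so that $\dim\fg_{2\gamma}>\dim\fg_\gamma$ makes every net coefficient positive. Taking (\ref{eq:1>2}) as an input fits how the corollary is framed, but that inequality is where the substance lies. The paper gets it from Corollary~\ref{cor:>} (comparing the denominator identities of $\fg(A,\tau)$ and its sibling $\fg(A,\emptyset)$) in the anisotropic case, and from the classification otherwise. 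A small slip: for odd non-isotropic real $\alpha$, the root $2\alpha$ is not real in the paper's convention. It is neither real nor imaginary, which is exactly why no condition is attached to it.

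The step that does not hold up as written is the passage from the conditions $H_{m\alpha}(\lambda)\neq 0$, with $\alpha\in\Delta^{\ima+}$ and $m\geq 2$, to the single conditions $2(\lambda+\rho,\beta)\neq(\beta,\beta)$ for imaginary roots $\beta$. This needs the structural fact that $m\alpha\in\Delta^{\ima+}$ for every $\alpha\in\Delta^{\ima+}$ and every $m\in\mathbb{Z}_{>0}$. See \cite{Kbook}, Chapter~V, for $\tau=\emptyset$, and \cite{K78}, Proposition~1.6, extended in \cite{Shay}, in general; the paper invokes this just before Corollary~\ref{cor:Vermairred}.

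Your fallback for the case $m\alpha\notin\Delta$ is wrong. Suppose $\alpha$ is non-isotropic and either $m$ is odd or $\alpha$ is even. Then the pair $(m,\alpha)$ contributes $+\dim\fg_\alpha$, and your own pairing would give $c_{m\alpha}>0$. Since $(\alpha,\alpha)\neq 0$, the hyperplane $H_{m\alpha}=0$ is not among those listed in the statement, so in that situation the criterion would simply be false. Only in the isotropic case does $H_{m\alpha}=mH_\alpha$ let you avoid this fact. Once you cite it, $H_{m\alpha}$ is literally the $m=1$ hyperplane of the imaginary root $m\alpha$, and your argument goes through.
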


 For Classes I and II this criterion specializes to a well-known irreducibility criterion.

\subsection{}
The paper is organized as follows. In Section~\ref{sect:KM}, we recall the definition and basic properties of Kac-Moody superalgebras, including reflexions, the skeleton, and real and imaginary roots, and briefly review their classification. In the next section, we introduce sibling Kac--Moody superalgebras and, by comparing their Weyl denominator identities, establish the root-multiplicity inequality needed for the irreducibility criterion. We then apply this inequality to the Shapovalov determinant for symmetrizable Kac--Moody superalgebras and derive the irreducibility criterion for Verma modules. In the final section, we discuss further properties of Shapovalov determinants, including their leading terms and real-root factors, and briefly review what is known in the non-symmetrizable cases.

\section*{Acknowledgments}
I am grateful to Vladimir Hinich, 
Victor Kac and Vera Serganova for helpful discussions and comments. I would also like to thank the members of the STARS seminar for their interest and useful feedback during the presentation of these results. I used ChatGPT (OpenAI) to assist with polishing the English and improving the exposition of the manuscript; the mathematical content and arguments are my own.
This research was 
supported by the Minerva Foundation with funding from the Federal German Ministry for Education and Research.

\section{Kac-Moody superalgebras $\fg(A,\tau)$}\label{sect:KM}

Our base field is $\mathbb C$. In this section, we introduce the main definitions, including that of a symmetrizable Kac--Moody superalgebra. Our definition is a slightly simplified version of the one given in~\cite{GHS}, except that we allow simple isotropic roots; in particular, it includes the semidirect product of a Heisenberg Lie algebra and its derivation.

\subsection{Contragredient superalgebras $\fg(A,\tau)$}

The construction below is similar to that in~\cite{Kbook}, Chapter I.

To define a contragredient superalgebra $\fg(A,\tau)$ of rank $\ell$, we start with a complex $\ell\times\ell$ Cartan matrix $A$, a subset $\tau\subset\{1,\ldots,\ell\}$, a finite-dimensional vector space $\fh$, and two families of linearly independent vectors
$\{\alpha_i^\vee\}_{i=1}^{\ell}\subset\fh$ and
$\{\alpha_i\}_{i=1}^{\ell}\subset\fh^*$
satisfying
$\langle\alpha_i^\vee,\alpha_j\rangle=a_{ij}$.
We call $(A,\tau)$ a {\em Cartan datum}.

Following~\cite{GHS}, we consider the ``half-baked'' Lie superalgebra $\tilde{\fg}$ generated by $\fh$ and the Chevalley generators $e_i,f_i$, $i=1,\ldots,\ell$, which are odd for $i\in\tau$ and even otherwise, subject to the relations
$$
\begin{array}{l}
[h,e_i]=\alpha_i(h)e_i,\qquad
[h,f_i]=-\alpha_i(h)f_i,\qquad
[\fh,\fh]=0,\
[e_i,f_j]=\delta_{ij}\alpha_i^\vee
\end{array}
$$
for $i,j=1,\ldots,\ell$.

The Lie superalgebra $\tilde{\fg}$ has a unique maximal ideal with zero intersection with $\fh$, and the contragredient Lie superalgebra $\fg:=\fg(A,\tau)$ is the quotient of $\tilde{\fg}$ by this ideal.

\subsubsection{}
Denote by $\tilde{\fn}^-$ (resp., $\tilde{\fn}$) the Lie subalgebra of $\tilde{\fg}$ generated by the $f_i$'s (resp., the $e_i$'s), $i=1,\ldots,\ell$. Arguing as in~\cite{Kbook}, Chapter I, one shows that $\tilde{\fn}^-$ and $\tilde{\fn}$ are free Lie superalgebras on these sets of generators and that
$\tilde{\fg}=\tilde{\fn}^-+\fh+\tilde{\fn}$.
This induces the triangular decomposition
$\fg=\fn^-+\fh+\fn$.
We set
$$
\begin{array}{l}
\Delta:=\{\alpha\in\fh^*|\ \alpha\neq0,\ \fg_\alpha\neq0\},\ \ \ 
\Delta^+:=\{\alpha\in\fh^*|\ \fn_\alpha\neq0\}.
\end{array}
$$

Then
$$
\Delta=-\Delta^+\coprod\Delta^+.
$$

\subsubsection{Parity}\label{parity}
For $\alpha\in\Sigma $ we set 
$p(\alpha)=\ol{0}$ if $i\not\in\tau$ and  $p(\alpha)=\ol{1}$
otherwise. Extend this to the group homomorphism
$p: \mathbb{Z}\Sigma\to \mathbb{Z}_2$ we obtain
$\Delta=\Delta_{\ol{0}}\coprod \Delta_{\ol{1}}$, where
$$\Delta_{\ol{i}}=\{\alpha\in\Delta|\ p(\alpha)=\ol{i}\}=
\{\alpha\in\Delta|\ \fg_\alpha\cap \fg_{\ol{i}}\not=0\}.$$

\subsubsection{Weyl vector}
A vector $\rho\in\fh^*$ satisfying 
$2\langle\rho,\alpha^{\vee}\rangle=\langle\alpha,\alpha^{\vee}\rangle$
is called a {\em Weyl vector}.

\subsubsection{Symmetrizability}
We have $\fg(A,\tau)=\fg(DA,\tau)$ for any
 invertible diagonal matrix $D$. 
The Cartan datum $(A,\tau)$ and the contragredient superalgebra $\fg(A,\tau)$
are called {\em symmetrizable} if $DA$ is symmetric for some 
 invertible diagonal matrix $D$. 
In this case, $\fg(A,\tau)$  admits a non-degenerate supersymmetric invariant bilinear form, which 
induces a non-degenerate symmetric bilinear form $(-,-)$ on $\fh^*$.

\subsection{Algebra $\fg(\alpha)$}\label{galpha}
For each simple root $\alpha=\alpha_i\in\Sigma$, the spaces $\fg_{\pm\alpha}$ and $[\fg_\alpha,\fg_{-\alpha}]$ are one-dimensional, and $[\fg_\alpha,\fg_{-\alpha}]$ is spanned by $\alpha^{\vee}:=\alpha^{\vee}_i$. If $\langle\alpha^\vee,\alpha\rangle=0$, we call $\alpha$ {\em isotropic}; otherwise, we call $\alpha$ {\em non-isotropic}.

Let $\fg(\alpha)$ be the subalgebra generated by $\fg_{\pm\alpha}$. One has
\begin{equation}\label{eq:galpha}
\fg(\alpha)\cong
\left\{
\begin{array}{lll}
\fsl_2 & & \text{if $\alpha$ is even and non-isotropic},\\
\mathfrak{osp}(1|2) & & \text{if $\alpha$ is odd and non-isotropic},\\
\text{Heisenberg Lie algebra} & & \text{if $\alpha$ is even and isotropic},\\
\fsl(1|1):=[\fgl(1|1),\fgl(1|1)] & & \text{if $\alpha$ is odd and isotropic}.
\end{array}
\right.
\end{equation}

\subsubsection{}
Assume that
\begin{equation}\label{eq:cond0}
a_{ij}=0\quad\Longrightarrow\quad a_{ji}=0.
\end{equation}
(If $(A,\tau)$ is symmetrizable, then $A$ satisfies this assumption.)

Using~\cite{Kbook}, Proposition 3.9, one readily sees that $\fg(\alpha_i)$ acts locally finitely in the adjoint representation under the following conditions:
\begin{equation}\label{eq:cond1}
\begin{array}{lcl}
a_{ii}\neq 0,\ i\notin\tau
&\Longrightarrow&
\displaystyle\frac{2a_{ij}}{a_{ii}}\in\mathbb Z_{\leq0}\\
a_{ii}\neq0,\ i\in\tau
&\Longrightarrow&
\displaystyle\frac{2a_{ij}}{a_{ii}}\in2\mathbb Z_{\leq0};\\
a_{ii}=0,\ i\notin\tau
&\Longrightarrow&
a_{ij}=0
\end{array}
\end{equation}
for all $j\neq i$.

From now on, we assume that the Cartan datum satisfies~(\ref{eq:cond0}) and~(\ref{eq:cond1}).

\subsubsection{Reflecions}\label{reflections}
If $\alpha\in\Sigma$ is non-isotropic,
 we define the reflection $s_{\alpha}\in GL(\fh^*)$  by  the  formula:
 
$$s_{\alpha}\mu=\mu-\frac{2\langle \mu, \alpha^{\vee}\rangle}{\langle \alpha, \alpha^{\vee}\rangle} \text{ for }\mu\in\fh^*.$$

By~\cite{Kbook}, Lemma 3.8, in this case
$\fg(A,\tau)$ admits an automorphism 
$$\phi_i:=\exp(f_i)\exp(-e_i)\exp(f_i)$$ 
and one has $\phi_i(\fg_{\gamma})=\fg_{s_{\alpha}\gamma}$
for all $\gamma\in\Delta$.

\subsubsection{Reflexions}\label{reflexions}
For each $\beta\in\Sigma$, the contragredient
Lie superalgebra $\fg(A,\tau)$ is isomorphic 
to a contragredient Lie superalgebra $\fg(A',\tau')$
with the isomorphism $\iota:\fg(A',\tau')\iso \fg(A,\tau)$
such that $\iota(\fh')=\fh$ and 
$$\Delta^{'+}=(\Delta^+\setminus \mathbb{Z}_{>0}\beta)\cup
\mathbb{Z}_{<0}\beta.$$
We denote the new set of simple roots by $r_{\beta}\Sigma$
and set
$r_i(A,\tau):=(A',\tau')$.

If $\beta=\alpha_i$ is non-isotropic, then 
 $\iota(\fg(A',\tau'))$ is 
given by  $e'_i:=\phi_i(e_i)$
and $f'_i:=\phi_i(f_i)$. In this case $r_{\beta}\Sigma:=s_{\alpha_i}\Sigma$
and 
$r_i(A',\tau')=(A,\tau)$. 

However, if $\alpha_i$ is  isotropic,
then $r_i(A',\tau')\not=(A,\tau)$; we will not give formulae for this case except for the formula
$$r_{\alpha_i}\Sigma=\{-\alpha_i\}\cup \{\alpha_j|\ a_{ij}=0, j\not=i\}\cup \{\alpha_j+\alpha_i|\ a_{ij}\not=0\}.$$
(If $\alpha_i$ is even isotropic, then the conditions~(\ref{eq:cond1}) give $r_{\alpha_i}\Sigma=
(\Sigma\setminus\{\alpha_i\})\cup
\{-\alpha_i\}$).

If the new Cartan datum  $r_i(A,\tau)$ satisfies 
the conditions~(\ref{eq:cond0}), (\ref{eq:cond1}),
we can define $r_jr_i(A,\tau)$.
It is easy to see that  $-\beta\in r_{\beta}\Sigma$ and
$r_{-\beta}r_{\beta}\Sigma=\Sigma$ with
 $r_ir_i(A,\tau)=(A,\tau)$. 
We call $\fg(A,\tau)$ a {\em Kac-Moody superalgebra}
if all Cartan data $r_{i_1}\ldots r_{i_s}(A,\tau)$
satisfy the conditions~(\ref{eq:cond0}), (\ref{eq:cond1}).

\subsubsection{Remark}
In the classical definition,  the case
of Heisenberg Lie algebras is excluded
by assuming $a_{ii}\not=0$ for $i\not\in\tau$.

\subsection{Skeleton and real roots}
Let  $\fg(A,\tau)$ be a Kac-Moody superalgebra.
Then $r_{i_1}\ldots r_{i_s}\Sigma$ is well-defined
for any sequence of $1\leq i_1,\ldots,i_s\leq \ell$
and we define the skeleton $\Sk$
as a graph with vertices  $r_{i_1}\ldots r_{i_s}\Sigma$ 
and edges $r_{\beta}: \Sigma\to r_{\beta}\Sigma$.
This graph is studied in~\cite{GHS}.

We say that $\alpha$ is a {\em real root} if $\alpha\in\Sigma'$ for some $\Sigma'\in\Sk$; we denote the set of real roots by $\Delta^{\ree}$. For a real root $\alpha$, we define $\fg(\alpha)$ as above; in particular, the classification~(\ref{eq:galpha}) continues to hold. We call a real root $\alpha$ isotropic if $\fg(\alpha)\cong\fsl(1|1)$ or $\fg(\alpha)$ is a Heisenberg Lie algebra, and non-isotropic otherwise. 

If $\alpha$ is non-isotropic, we define the reflection $s_\alpha$ as above.
The subgroup of $GL(\fh^*)$ generated by the elements $s_\alpha$, $\alpha\in\Delta^{\ree}\setminus\Delta^{\is}$, is called the Weyl group and is denoted by $W$. We denote by $\Delta^{\is}$ the set of isotropic real roots.

We call a root $\alpha\in\Delta$ {\em imaginary} if neither $\alpha$ nor $\alpha/2$ is real, and denote the set of imaginary roots by $\Delta^{\ima}$. Thus, with our convention, if $\alpha$ is an odd non-isotropic real root, then $2\alpha$ is neither real nor imaginary. This convention differs slightly from some conventions in the literature,
in which $2\alpha$ is regarded as a real root.

\subsection{Classification}
We call $(A,\tau)$ {\em anisotropic} if $a_{ii}\not=0$
for all $i$. If $(A,\tau)$ is anisotropic, then
$\fg(A,\tau)$ is a Kac-Moody superalgebra if and only if $(A,\tau)$
satisfies~(\ref{eq:cond0}), (\ref{eq:cond1})
(since $r_i(A,\tau)=(A,\tau)$). 

We call $(A,\tau)$ and $\fg(A,\tau)$ {\em decomposable}
if, after
a permutation of indices, $A=\begin{pmatrix} A_1 &0 \\0 &A_2\end{pmatrix}$  and {\em indecomposable}
otherwise. Every Kac-Moody superalgebra is a direct product
of indecomposable ones. The indecomposable  Kac-Moody superalgebras
are naturally divided into the following classes:
\begin{itemize}
\item{ (FIN)} finite-dimensional;
\item{(AFF)} infinite-dimensional of finite Gelfand-Kirillov
dimension;
\item{(IND)} of infinite Gelfand-Kirillov dimension.
\end{itemize}

In terms of imaginary roots, 
$\Delta^{\ima}$ is empty for (Fin), all imaginary roots are proportional in case (Aff),
and $\Delta^{\ima}$ contains non-proportional imaginary roots
in (Ind), see~\cite{GHS}, Section 7. In all cases, 
if $\delta\in\Delta^{\ima}$, then $m\delta\in\Delta^{\ima}$ for 
all non-zero integral $m$, 
see~\cite{Kbook}, Section 5 (and~\cite{Shay} for $\tau\not=\emptyset$).

In case (FIN), every Kac-Moody superalgebra is symmetrizable;
this is the class referred to as  Class I from Introduction
(classified in~\cite{Ksuper}).
Class (AFF) consists of Class II from Introduction (classified in~\cite{vdLeur})
together with the non-symmetrizable 
Kac-Moody superalgebras $\fq_n^{(2)}$ and $S(2|1;b)$,
described in~\cite{Hoyt},\cite{Sint}.
 Class (IND) consists of anisotropic (symmetrizable and non-symmetrizable) Kac-Moody superalgebras $\fg(A,\tau)$, where $A$ is of (Ind)-type according to the classification in~\cite{Kbook}, Theorem 4.3, together with the non-symmetrizable Kac--Moody superalgebras  $Q(m,n,s)^{\pm}$; see~\cite{K78}, \cite{Hoyt},\cite{Sint} for classifications.

\section{Siblings}\label{sect:Siblings}
We say that $\fg(A,\tau)$ and $\fg(A',\tau')$ are {\em siblings} if both are Kac--Moody superalgebras, $(A,\tau)$ is anisotropic, and $A=A'$. Each $\fg:=\fg(A,\tau)$ has a sibling $\fg':=\fg(A,\emptyset)$, which is a Kac--Moody algebra. We identify the Cartan subalgebra of $\fg'$ with $\fh$ and denote the root system of $\fg'$ by $\Delta'$. The root systems $\Delta$ and $\Delta'$ have the same set of simple roots $\Sigma$ and the same simple coroots $\{\alpha^\vee|\ \alpha\in\Sigma\}$. Therefore, siblings have the same Weyl group and the same real roots.

For example, for $A=(2)$ we have the pair of siblings
$$
\fg(A,\emptyset)=\fsl_2,\qquad
\fg(A,\{1\})=\mathfrak{osp}(1|2).
$$
More generally, $\mathfrak{osp}(1|2n)$ and $\mathfrak{o}_{2n+1}$ are siblings for every $n\geq1$. There are no other finite-dimensional siblings. The connection between these algebras was studied 
in~ \cite{Zhang}, \cite{Lan}, although the term ``siblings'' was not used there.

\subsection{}
\begin{thm}{thm:siblings}
Let $(A,\tau)$ be a symmetrizable anisotropic Cartan datum satisfying~(\ref{eq:cond1}). 
Let $\fg:=\fg(A,\tau)$ and $\fg':=\fg(A,\emptyset)$.
\begin{enumerate}
\item If $L_{\fg}(\lambda)$ is an integrable irreducible $\fg$-module
of highest weight $\lambda$, then $L_{\fg'}(\lambda)$ is an integrable $\fg'$-module and
$$
\ch L_{\fg}(\lambda)=\ch L_{\fg'}(\lambda).
$$

\item
One has $\Delta^{\ima}=(\Delta')^{\ima}$ and 
$$
\begin{array}{ll}
\dim\fg_{\alpha}=\dim\fg'_{\alpha}  & \text{ for }\ \alpha\in\Delta^{\ima}\ \ \text{ s. t. }\ \ \frac{\alpha}{2}\not\in\Delta^{\ima}_{\ol{1}},\\
\dim \fg_{2\alpha}
=\dim \fg'_{\alpha}+\dim\fg'_{2\alpha}
& \text{ for }\ \alpha\in\Delta^{\ima}_{\ol{1}}.
\end{array}$$
\end{enumerate}
\end{thm}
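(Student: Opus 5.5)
The plan is to compare the Weyl--Kac character formulas for $\fg$ and $\fg'$, and then to compare their denominator identities. Both algebras are symmetrizable and anisotropic, have the same Weyl group $W$, and have the same Weyl vector $\rho$, since $\rho$ is defined through the coroots, which coincide. The even part of $\fg(\alpha_i)$ for $i\in\tau$ is $\fsl_2$ with simple root $2\alpha_i$. Integrability of $L_{\fg}(\lambda)$ therefore means $\langle\lambda,\alpha_i^\vee\rangle\in\mathbb Z_{\geq0}$ for $i\notin\tau$, and $\langle\lambda,\alpha_i^\vee\rangle\in 2\mathbb Z_{\geq0}$ for $i\in\tau$ (the $\mathfrak{osp}(1|2)$ condition). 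In particular $\lambda$ is dominant integral for $\fg'$, so $L_{\fg'}(\lambda)$ is integrable.

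For the anisotropic symmetrizable case I will use the Kac character formula (\cite{K78}, cf.~\cite{Kbook}, Ch.~10):
$$\ch L_{\fg}(\lambda)=\frac{\sum_{w\in W}\varepsilon(w)e^{w(\lambda+\rho)-\rho}}{R},\qquad R:=\frac{\prod_{\alpha\in\Delta^+_{\ol 0}}(1-e^{-\alpha})^{\dim\fg_\alpha}}{\prod_{\alpha\in\Delta^+_{\ol 1}}(1+e^{-\alpha})^{\dim\fg_\alpha}}.$$
Here $\varepsilon$ is the sign character of $W$ generated by the reflections $s_\alpha$. This character is the same for $\fg$ and $\fg'$, because $W$ acts on $\fh^*$ identically for the two algebras. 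The analogous formula holds for $\fg'$ with denominator $R'$. Taking $\lambda=0$ in both formulas gives the denominator identities $R=\sum_{w}\varepsilon(w)e^{w\rho-\rho}=R'$. Hence the numerators agree and so do the denominators, which yields $\ch L_{\fg}(\lambda)=\ch L_{\fg'}(\lambda)$ and proves (1).

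For (2), I expand the identity $R=R'$. I write $1+e^{-\alpha}=(1-e^{-2\alpha})/(1-e^{-\alpha})$ for odd $\alpha$. Every factor is then of the form $(1-e^{-\beta})^{n_\beta}$, and the identity becomes
$$\prod_{\beta\in\Delta^+}(1-e^{-\beta})^{\dim\fg_\beta^{\ol0}-\dim\fg_\beta^{\ol1}+\dim\fg^{\ol1}_{\beta/2}}=\prod_{\beta\in\Delta'^+}(1-e^{-\beta})^{\dim\fg'_\beta}.$$
Here $\fg_\beta^{\ol i}$ is the parity-$\ol i$ part; since parity is a function of $\beta$, exactly one of $\fg_\beta^{\ol0}$, $\fg_\beta^{\ol1}$ can be nonzero. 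A product $\prod(1-e^{-\beta})^{n_\beta}$ over $\beta\in\mathbb Z_{\geq0}\Sigma\setminus\{0\}$ determines the exponents $n_\beta$ uniquely: take logarithms and induct on height, or use the fact that the formal factorization is unique. This gives, for each $\beta$,
$$n_\beta=\pm\dim\fg_\beta+\dim\fg_{\beta/2}\cdot[\beta/2\in\Delta_{\ol1}]=\dim\fg'_\beta .$$

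The main obstacle is to extract the stated equalities from this identity, which requires tracking parities and signs. If $\beta$ is odd, then $\beta/2$ has no consistent parity, so that term vanishes, and $n_\beta=-\dim\fg_\beta\le0$. Since $n_\beta=\dim\fg'_\beta\ge0$, both sides must be $0$. Thus this route forces $\fg_\beta=0$ for odd $\beta$, which contradicts the existence of odd roots. So I have the sign of the odd factor backwards: with the supercharacter (or with $1+e^{-\alpha}$ in the numerator, as the correct superdenominator has) the odd exponent enters as $+\dim\fg_\beta$. I will use the correct form $R=\prod_{\ol0}(1-e^{-\alpha})^{\dim}/\prod_{\ol1}(1+e^{-\alpha})^{\dim}$ evaluated in the $\mathbb Z_2$-twisted variables $e^{-\alpha}\mapsto(-1)^{p(\alpha)}e^{-\alpha}$, which matches the $\fg'$ identity; getting this normalization right is exactly the delicate step. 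After the correct normalization, the odd factor becomes $(1-e^{-\alpha})^{\dim\fg_\alpha}(1-e^{-2\alpha})^{-\dim\fg_\alpha}$ relative to the even ones. Matching exponents gives three cases:
$$\begin{array}{ll}
\dim\fg'_\beta=\dim\fg_\beta & \text{for odd }\beta,\\
\dim\fg'_\beta=\dim\fg_\beta & \text{for even }\beta\text{ with }\beta/2\notin\Delta_{\ol1},\\
\dim\fg'_\beta=\dim\fg_\beta-\dim\fg_{\beta/2} & \text{for even }\beta=2\alpha\text{ with }\alpha\in\Delta_{\ol1}.
\end{array}$$
Restricting to imaginary roots, and using $\Delta^{\ree}=\Delta'^{\ree}$ together with the fact that real odd $\alpha$ has $2\alpha\notin\Delta'$, this yields $\Delta^{\ima}=(\Delta')^{\ima}$. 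The last case reads $\dim\fg_{2\alpha}=\dim\fg_\alpha+\dim\fg'_{2\alpha}=\dim\fg'_\alpha+\dim\fg'_{2\alpha}$, where the second equality applies the first case to the odd root $\alpha$.
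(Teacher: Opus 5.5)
Part (1) is the paper's argument. For part (2) you take a genuinely different and cleaner route. The paper first uses $\dim\fg_\alpha=\dim\fg_{2\alpha}=1$ for odd real $\alpha$ to cancel the real-root contributions on both sides. This leaves an identity over imaginary roots that mixes factors $(1-e^{-\alpha})$ and $(1+e^{-\alpha})$. The paper then proves \Lem{lem:technicX}, which inductively regroups $(1-e^{-\beta})^{m}(1+e^{-\beta})^{m}=(1-e^{-2\beta})^{m}$ starting from the lowest height. You do this regrouping once and globally, via $(1+e^{-\alpha})^{-1}=(1-e^{-\alpha})(1-e^{-2\alpha})^{-1}$. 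Both sides then become products $\prod(1-e^{-\beta})^{n_\beta}$, and uniqueness of the exponents follows from a one-line minimal-height argument. This treats real and imaginary roots uniformly and needs neither the preliminary cancellation nor the auxiliary lemma.

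Two points need repair.

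First, the contradiction you ran into is an arithmetic slip, not a normalization issue. Since $(1+e^{-\alpha})^{-d}=(1-e^{-\alpha})^{d}(1-e^{-2\alpha})^{-d}$, the untwisted identity $R=R'$, exactly as you stated it, gives $n_\beta=\dim\fg_\beta-[\beta/2\in\Delta_{\ol 1}]\dim\fg_{\beta/2}$. These are precisely your final three cases. The substitution $e^{-\alpha}\mapsto(-1)^{p(\alpha)}e^{-\alpha}$ applied to $R$ alone is wrong and should be deleted. It turns $R$ into the superdenominator, which is not $R'$: for $\mathfrak{osp}(1|2)$ versus $\fsl_2$ it yields $1+e^{-\alpha}$ instead of $1-e^{-\alpha}$.

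Second, $\Delta^{\ima}=(\Delta')^{\ima}$ does not follow from the exponent identities alone. For $\alpha\in\Delta^{\ima}_{\ol 1}$ you do get $2\alpha\in\Delta^{\ima}$, since $\dim\fg_{2\alpha}\geq\dim\fg_\alpha>0$. However, $2\alpha\in\Delta'$ amounts to $\dim\fg_{2\alpha}>\dim\fg_\alpha$, which the identity does not provide. You need the Kac--Moody fact (Kac's book, Chapter~V) that multiples of imaginary roots of $\fg'$ are again roots. Apply it to $\alpha$, which lies in $(\Delta')^{\ima}$ by your first case together with $\Delta^{\ree}=(\Delta')^{\ree}$. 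This is the same input the paper invokes in \Cor{cor:>}.
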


\subsubsection{Remark}
The equality $\Delta^{\ima}=(\Delta')^{\ima}$ also follows from the general description of imaginary roots for Kac--Moody superalgebras given in~\cite{Shay}. Note that this equality implies
$
\Delta=\Delta'\cup
\{2\alpha|\ \alpha\in W(\Delta_{\ol{1}}\cap \Sigma)\},
$
where $W$ is the Weyl group.

\subsubsection{Example}

It is possible for $L_{\fg'}(\lambda)$ to be integrable while $L_{\fg}(\lambda)$ is not. For example, for $A=(2)$ we have $\fg=\mathfrak{osp}(1|2)$ and $\fg'=\fsl_2$. Since $\fg$ and $\fg'$ are finite-dimensional, integrability of simple modules is equivalent to finite-dimensionality. If $\alpha$ is the simple root, then $L_{\fg'}(\frac{\alpha}{2})$ is two-dimensional, whereas $L_{\fg}(\frac{\alpha}{2})$ is infinite-dimensional.

\subsubsection{Proof of~\Thm{thm:siblings}}
The module $L_{\fg}(\lambda)$ is integrable if and only if, for each simple root $\alpha$, the value
$\langle\lambda,\alpha^\vee\rangle
$
is a non-negative integer and is even if $\alpha$ is odd. Therefore, if $L_{\fg}(\lambda)$ is integrable, then $L_{\fg'}(\lambda)$ is integrable.

Since  $\Delta^+$ and $(\Delta')^+$
have the same set of simple roots $\Sigma$ and the same 
simple coroots,
we can choose the  Weyl vectors to be equal (i.e., $\rho=\rho'$). 
If $L_{\fg}(\lambda)$ is integrable, then, by above,  $L_{\fg'}(\lambda)$ is
integrable and the Weyl-Kac character formula (see~\cite{Kbook}, Theorem 10.4, and~\cite{K78}, Theorem 1) gives
\begin{equation}\label{eq:siblingWeylfor}\begin{array}{rl}
\prod_{\alpha\in (\Delta')^+} (1-e^{-\alpha})^{\dim \fg'_{\alpha}}\ch L_{\fg'}(\lambda)&=\sum\limits_{w\in W} \sgn w\ e^{w(\lambda+\rho)-\rho}\\
&=
\frac{\prod\limits_{\alpha\in \Delta_{\ol{0}}^+} (1-e^{-\alpha})^{\dim \fg_{\alpha}}}
{\prod\limits_{\alpha\in \Delta_{\ol{1}}^+} (1+e^{-\alpha})^{\dim \fg_{\alpha}}}\ch L_{\fg}(\lambda).\end{array}\end{equation}
For $\lambda=0$ this gives 
\begin{equation}\label{eq:siblingWeyldenom}\
\prod_{\alpha\in (\Delta')^+} (1-e^{-\alpha})^{\dim \fg'_{\alpha}}=
\frac{\prod\limits_{\alpha\in \Delta_{\ol{0}}^+} (1-e^{-\alpha})^{\dim \fg_{\alpha}}}
{\prod\limits_{\alpha\in \Delta_{\ol{1}}^+} (1+e^{-\alpha})^{\dim \fg_{\alpha}}}.\end{equation}
Combining~(\ref{eq:siblingWeylfor}) and~(\ref{eq:siblingWeyldenom}) we obtain 
$\ch L_{\fg}(\lambda)=\ch L_{\fg'}(\lambda)$ as required.
This establishes (i).

For (ii) observe that, by~(\ref{eq:galpha}), for 
$\alpha\in\Delta^{\ree}_{\ol{0}}$
we have $\dim \fg_{\alpha}=\dim\fg'_{\alpha}=1$, and for
$\alpha\in\Delta^{\ree}_{\ol{1}}$
we have $\dim \fg_{\alpha}=\dim\fg_{2\alpha}=1$
and $\dim \fg'_{\alpha}=1$, $\dim\fg'_{2\alpha}=0$. Therefore
$$\prod\limits_{\alpha\in (\Delta')^{\ree,+}} (1-e^{-\alpha})^{\dim \fg'_{\alpha}}=\frac{\prod\limits_{\alpha\in \Delta_{\ol{0}}^{\ree,+}} (1-e^{-\alpha})^{\dim \fg_{\alpha}}}
{\prod\limits_{\alpha\in \Delta_{\ol{1}}^{\ree,+}} (1+e^{-\alpha})^{\dim \fg_{\alpha}}}.$$
Using~(\ref{eq:siblingWeyldenom}) we get
$$\prod\limits_{\alpha\in (\Delta')^{\ima,+}} 
(1-e^{-\alpha})^{\dim \fg'_{\alpha}}=\frac{\prod\limits_{\alpha\in \Delta_{\ol{0}}^{\ima,+}} (1-e^{-\alpha})^{\dim \fg_{\alpha}}}
{\prod\limits_{\alpha\in \Delta_{\ol{1}}^{\ima,+}} (1+e^{-\alpha})^{\dim \fg_{\alpha}}}.$$

Set $X_1:=\Delta^{\ima,+}_{\ol{1}}$ and
$X_0:=\{\alpha\in \Delta^{\ima,+}\cup (\Delta')^{\ima,+}|\ \alpha,\frac{\alpha}{2}\not\in \Delta^{\ima,+}_{\ol{1}}\}$.
Then 
$$
\prod\limits_{\alpha \in X_0} (1-e^{-\alpha})^{m_{\alpha}}
\prod\limits_{\alpha \in X_1} (1-e^{-\alpha})^{m_{\alpha-}}
\prod\limits_{\alpha \in X_1} (1+e^{-\alpha})^{m_{\alpha+}}=1$$
 where $m_{\alpha}:=\dim \fg'_{\alpha}-\dim\fg_{\alpha}$ and
$$ \begin{array}{lcl}
m_{\alpha+}:=\dim\fg_{\alpha}+\dim \fg'_{2\alpha}-\dim\fg_{2\alpha}, & &
m_{\alpha-}:=\dim\fg'_{\alpha}+\dim \fg'_{2\alpha}-\dim\fg_{2\alpha}.
\end{array}$$

For every $\beta\in X_1$ we have $\beta,2\beta\not\in X_0$
by construction and $2\beta\not\in X_1$ by~$\S$~\ref{parity}.
Using~\Lem{lem:technicX}, we obtain
$m_{\alpha}=0$ for all $\alpha\in X_0$ and $m_{\alpha\pm}=0$ for all $\alpha\in X_1$.  This completes the proof.
\qed

\subsubsection{}
\begin{cor}{cor:>}
 Let $\fg$ be a symmetrizable  anisotropic Kac-Moody superalgebra.
If $\gamma$ is an odd imaginary root, then 
$\dim \fg_{2\gamma}>\dim \fg_{\gamma}$. 
\end{cor}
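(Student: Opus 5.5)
We derive the corollary from \Thm{thm:siblings}(ii) together with the fact that root multiplicities of the sibling Kac--Moody algebra $\fg'=\fg(A,\emptyset)$ are monotone along multiples of imaginary roots.

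Let $\gamma\in\Delta^{\ima}_{\ol{1}}$. The second formula of \Thm{thm:siblings}(ii) gives
$$\dim\fg_{2\gamma}=\dim\fg'_{\gamma}+\dim\fg'_{2\gamma}.$$
We also need $\dim\fg_\gamma=\dim\fg'_\gamma$, which is the first formula applied to $\alpha=\gamma$. Its hypothesis is that $\gamma/2\notin\Delta^{\ima}_{\ol{1}}$. This holds because an odd root cannot be twice a root: if $\gamma/2$ were a root, then $p(\gamma)=2p(\gamma/2)=\ol{0}$ by the parity homomorphism of \S\ref{parity}. Combining the two formulas,
$$\dim\fg_{2\gamma}-\dim\fg_\gamma=\dim\fg'_{2\gamma}.$$
So it suffices to show $\dim\fg'_{2\gamma}\geq 1$, that is, $2\gamma\in\Delta'$.

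By \Thm{thm:siblings}(ii) we have $\gamma\in\Delta^{\ima}=(\Delta')^{\ima}$, so $\gamma$ is an imaginary root of the Kac--Moody algebra $\fg'$. By~\cite{Kbook}, Section 5, as recalled in the Classification subsection, $m\gamma$ is then an imaginary root of $\fg'$ for every nonzero integer $m$. Hence $2\gamma\in(\Delta')^{\ima}$, so $\dim\fg'_{2\gamma}>0$, which gives the strict inequality.

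The only points needing care are the applicability of the first formula to $\gamma$ (the parity check above) and the use of the fact that multiples of imaginary roots are imaginary in the ordinary Kac--Moody setting. Both are already available, so no serious obstacle is expected.
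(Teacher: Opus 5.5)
Your proposal is correct and is essentially the paper's proof. You combine the two formulas of \Thm{thm:siblings}(ii) to get $\dim\fg_{2\gamma}-\dim\fg_\gamma=\dim\fg'_{2\gamma}$, and then use $\gamma\in(\Delta')^{\ima}$ together with the fact from \cite{Kbook} that $2\gamma$ is again an imaginary root of $\fg'$; the only difference is that you spell out the parity check ($\gamma/2$ cannot be a root since $\gamma$ is odd) that justifies applying the first formula to $\dim\fg_\gamma$, which the paper leaves implicit.
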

\begin{proof}
By~\Thm{thm:siblings} 
(ii),  we have
$\dim \fg_{2\gamma}-\dim \fg_{\gamma}=\dim\fg'_{2\gamma}$
and  $\gamma\in\Delta^{'\ima}$.
By~\cite{Kbook}, Chapter V, $\gamma\in\Delta^{'\ima}$ implies $2\gamma\in\Delta^{'\ima}$, so $\dim \fg'_{2\gamma}>0$.
\end{proof}

\subsection{}
\begin{lem}{lem:technicX}
Let $X$  be a subset of $Q^+:=\mathbb{Z}_{\geq 0}\Sigma$ and
$X=X_0\coprod X_1$ is a decomposition such that 
$2\beta\not\in X$ for every $\beta\in X_1$.
If 
$$\prod\limits_{\alpha\in X_0}(1-e^{-\alpha})^{m_{\alpha}}
\prod\limits_{\alpha\in X_1}(1-e^{-\alpha})^{m_{\alpha-}}
\prod\limits_{\alpha\in X_1}(1+e^{-\alpha})^{m_{\alpha+}}=1$$
for some integers $m_{\alpha}, m_{\alpha\pm}$, then
all these integers are equal to $0$.
\end{lem}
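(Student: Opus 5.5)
Throughout, work in the completion $\mathbb{Z}[[e^{-\alpha}]]_{\alpha\in\Sigma}$ of the group ring of $Q^+$ (or its field of fractions, so that negative exponents make sense). Suppose not all exponents vanish. The plan is to pick a minimal element of $X$ carrying a nonzero exponent and compare coefficients in degree $\beta$ and, if needed, $2\beta$, to reach a contradiction. Here ``minimal'' is taken with respect to the height $\htt(\alpha)=\sum k_i$ for $\alpha=\sum k_i\alpha_i$; for each height there are finitely many elements of $Q^+$, so the infinite products make sense and the argument can proceed degree by degree.

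First, set $S:=\{\alpha\in X_0\mid m_\alpha\neq0\}\cup\{\alpha\in X_1\mid (m_{\alpha-},m_{\alpha+})\neq(0,0)\}$ and suppose $S\neq\emptyset$. Let $h$ be the minimal height of elements of $S$ and pick $\beta\in S$ of height $h$. Expand each factor as $(1\mp e^{-\alpha})^{m}=1\mp m e^{-\alpha}+O(e^{-2\alpha})$. Modulo terms of height $>h$, the product equals
$$1-\sum_{\alpha\in X_0,\ \htt\alpha=h} m_\alpha e^{-\alpha}+\sum_{\alpha\in X_1,\ \htt\alpha=h}(m_{\alpha+}-m_{\alpha-})e^{-\alpha}.$$
This holds because a product of at least two factors' nonconstant terms, or a quadratic term $e^{-2\alpha}$, has height $>h$ when all contributing $\alpha\in S$ have height $\ge h$. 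Since the elements of $X$ are distinct weights, the coefficient of $e^{-\beta}$ gives $m_\beta=0$ if $\beta\in X_0$; so in fact $\beta\in X_1$ and $m_{\beta+}=m_{\beta-}=:m\neq0$.

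The crucial step is to look at the coefficient of $e^{-2\beta}$. Since $X_1\ni\beta$ implies $2\beta\notin X$, no factor contributes a linear term $e^{-2\beta}$. The contributions to $e^{-2\beta}$ are then of three kinds.
\begin{enumerate}
\item The quadratic terms of the two factors at $\beta$ itself: $(1-e^{-\beta})^{m}(1+e^{-\beta})^{m}=(1-e^{-2\beta})^{m}$, contributing $-m$.
\item Products $e^{-\alpha}e^{-\alpha'}$ with $\alpha+\alpha'=2\beta$, $\alpha\neq\alpha'$, both of height $h$ and in $S$.
\item Terms involving elements of height strictly between $h$ and $2h$ together with elements of height $<h$; these are impossible, since nothing in $S$ has height $<h$.
\end{enumerate}
Kind (3) therefore vanishes, and all factors of $S$ with height strictly between $h$ and $2h$ contribute only linearly, to weights different from $2\beta$, so they do not matter either. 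This leaves the kind (2) cross terms. They come from pairs of height-$h$ elements of $S$, each of which lies in $X_1$ with $m_{\alpha+}=m_{\alpha-}$, by the previous step applied to every height-$h$ element. For such $\alpha$ the combined factor is $(1-e^{-2\alpha})^{m_\alpha}$, which has no linear term at all. Hence there are no cross terms, and the coefficient of $e^{-2\beta}$ is exactly $-m\neq0$, contradicting the product being $1$.

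The step I expect to need care is exactly this reorganization. Once the degree-$h$ analysis forces $m_{\alpha+}=m_{\alpha-}$ for all height-$h$ elements, one should rewrite their factors as $(1-e^{-2\alpha})^{m_\alpha}$ before examining degree $2h$. Cleanest is an induction: replace $X$ by $X'$, moving each such $\alpha$ to $2\alpha\in X_0'$ with exponent $m_\alpha$. This is legitimate because $2\alpha\notin X$, and $2\alpha\ne 2\alpha'$ for $\alpha\ne\alpha'$. The element $2\beta$ has height $2h$, and in $X'$ all nonzero exponents have height $>h$. Repeating the height-$h'$ linear argument in $X'$ then forces $m_{2\beta}=0$, i.e.\ $m=0$. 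One must also check that $X'$ still satisfies the hypothesis $2\gamma\notin X'$ for $\gamma\in X_1'$. This holds because $X_1'\subset X_1$ and $X'\subset X\cup 2X_1$, and $2\gamma=2\alpha$ forces $\gamma=\alpha$, which has been removed from $X_1'$. Iterating by strong induction on height then gives the lemma.
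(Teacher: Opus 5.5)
Your proof is correct. Its core is a slightly more direct route than the paper's.

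Both arguments begin the same way. At the minimal height, comparing coefficients of $e^{-\beta}$ forces every minimal-height element with nonzero exponents into $X_1$ with $m_{\beta+}=m_{\beta-}$. Its two factors then merge into $(1-e^{-2\beta})^{m}$.

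The paper then iterates this rewriting. It replaces $\beta\in X_1$ by $2\beta\in X_0$ for every element of $X_1$ of height up to $2s$. It then reapplies the minimal-height observation to the resulting configuration $Y$, whose lowest-height elements now lie in $Y_0$.

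Your computation of the coefficient of $e^{-2\beta}$ shows this iteration is unnecessary. Every element with a nonzero exponent has height at least $h$. So at height $2h$ the only terms are:
\begin{itemize}
\item products of two height-$h$ terms, which after merging contribute exactly $-m_\beta$;
\item linear terms of single factors, which never sit at the weight $2\beta$ because $2\beta\notin X$.
\end{itemize}
Elements of height in $(h,2h)$ enter only through such linear terms or through products of height $>2h$. Hence the contradiction appears after one extra degree, and your proof is already complete at the end of the ``crucial step'' paragraph.

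The inductive reorganization in your last paragraph is essentially the paper's argument. There, the sentence claiming that the height-$h'$ linear argument forces $m_{2\beta}=0$ is only accurate after all elements of height $<2h$ have been processed, since the new minimum $h'$ may lie below $2h$. That is exactly why the paper has to ``repeat the procedure several times''; your direct computation avoids this bookkeeping.

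Your set $S$ also handles elements of $X_1$ with only one of $m_{\alpha\pm}$ nonzero more cleanly than the paper's ``without loss of generality all exponents are non-zero.''
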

\begin{proof}
Without loss of generality, we may assume that all exponents
$m_{\alpha}$,$m_{\alpha+}$, $m_{\alpha-}$ are non-zero 
for all relevant $\alpha\in X$. 

Denote the left-hand side of the above formula by $P(X,M)$, where $M$ stands for the set of exponents
$$
\{m_\alpha\}_{\alpha\in X_0}
\cup
\{m_{\alpha\pm}\}_{\alpha\in X_1}.
$$

We have to verify that $P(X,M)=1$ if $M$ is non-empty.
Suppose, to the contrary, that  $P(X,M)+1$ and $M$ is non-empty.

 For  each $\nu=\sum\limits_{\gamma\in \Sigma} n_{\gamma}\gamma\in Q^+$
we set $\htt \nu:=\sum\limits_{\gamma\in \Sigma} n_{\gamma}$
and introduce
$$X^{\leq i}:=\{\gamma\in X|\ 
\htt \gamma\leq i\};\ \ \ \ s:=\min_{\alpha\in X}\htt \alpha.$$
For all $i$ the sets $X^{\leq i}$ are finite
(since the sets
$\{\gamma\in Q^+|\ \htt \gamma=j\}$ are finite).

Let $\beta\in X$ be such that $\htt \beta=s$.
If $\beta\in X_0$, then the coefficient 
 of $e^{-\beta}$ in the expansion of $P(X;M)$ is equal to $-m_{\beta}$, so
 $m_{\beta}=0$, a contradiction. This gives
\begin{equation}\label{eq:Xs}
X^{\leq s}\subset X_1.\end{equation}

If $\beta\in X_1$, then the coefficient of $e^{-\beta}$ in the expansion of $P(X;M)$
is $m_{\beta_+}-m_{\beta-}$, so $m_{\beta_+}=m_{\beta_-}$. Then
$$P(X;M)=P(X';M'),\  \text{ 
where }X'_0=X_0\cup\{2\beta\},\ \ 
X'_1=X_1\setminus \{\beta\}$$ 
and 
$$m'_{2\beta}=m_{\beta+},
\ m'_{\alpha}=m_{\alpha},\ \ m'_{\alpha\pm}=m_{\alpha \pm}\ 
\text{ for all }\alpha\in X'\cap X.$$

Repeating the same procedure several times,
we obtain 
$$P(X,M)=P(Y,N),\ \text{ where
}\  Y_0=X_0\cup\{2\alpha|\ 
\alpha\in X_1\cap X^{\leq 2s}\},\  Y_1= X_1\setminus X^{\leq 2s}.$$
Letting $q:=\min_{\alpha\in Y}\htt \alpha$ we have
 $q\leq 2s$ and $Y^{\leq q}\subset Y_0$, which 
contradicts~(\ref{eq:Xs}).
\end{proof}

\subsection{Remark}
In~\cite{Kumar}, Shrawan Kumar established the Weyl-Kac denominator identity for general Kac--Moody algebras, without assuming symmetrizability of Cartan matrix. To the best of our knowledge, no analogous formula has yet been established for Kac--Moody superalgebras.

\section{Shapovalov determinant for  symmetrizable Kac-Moody superalgebras}\label{sect:Shap}
Let $\fg$ be a   symmetrizable Kac-Moody superalgebra.

\subsection{}
We introduce the Kostant partition function  $K: \mathbb{Z}\Delta\to \mathbb{Z}$ by the formula
$$K(\mu):=\dim\cU(\fn^+)_{\mu}.$$
Thus $K(\mu)\geq 0$ and $K(\mu)=0$ if
$\mu\not \in Q^+$, where $Q^+:=\mathbb{Z}_{\geq 0}\Sigma$.

For an odd root $\alpha$ the function
$$K_{\alpha}(\mu):=\sum\limits_{i=1}^{\infty} (-1)^{i+1} K(\mu-i\alpha).$$ 
is equal to the number of PBW-monomials of weight $\mu$
which contains $e_{\alpha}$. In particular, 
 $K_{\alpha}(\mu)\geq 0$ and $K_{\alpha}(\mu)=0$ if
$\mu\not \in\alpha+\in Q^+$.

\subsubsection{}
The  Shapovalov determinants were computed 
for semisimple Lie  algebras in~\cite{Shap}, for
symmetrizable Kac-Moody algebras in~\cite{KK} and
for  symmetrizable  Kac-Moody superalgebras in~\cite{Kconf} (with the sign corrected in~\cite{K86}).
The Shapovalov determinant is defined only up to non-zero scalar multiples. Throughout, we disregard such scalar factors.

One has 
$$\det S_{\nu}=
\prod\limits_{\alpha\in \Delta^+} 
H_{m\alpha}^{(-1)^{(m+1)p(\alpha)}K(\nu-m\alpha)\dim\fg_{\alpha}}.$$

\subsection{}\label{Xi01}
We set
$$\begin{array}{ll}
\Xi_0:=\{(\alpha,m)|\ \alpha\in\Delta^{\ree+}_{\ol{0}}, \ m\in\mathbb{Z}_{>0}\},\ &
\Xi_{\is}:=\{(\alpha,1)|\ \alpha\in\Delta^{\ree+}_{\ol{1}}\cap\Delta^{\is}\}\\
\Xi_1:=\{(\alpha,2m-1)|\ \alpha\in\Delta^{\ree+}_{\ol{1}}\setminus\Delta^{\is}, \ m\in\mathbb{Z}_{>0}\},\ & 
\ \Xi:=\Xi_0\cup\Xi_1\cup\Xi_{\is}.
\end{array}$$
Notice that for $(\alpha,m), (\beta,n)\in \Xi$ we have 
$m\alpha=n\beta$ only if $(\alpha,m)=(\beta,n)$.

\subsubsection{}
If $\alpha\in\Delta^{\ree}$, then, by~(\ref{eq:galpha}),  $\dim \fg_{\alpha}=1$ and 
$\mathbb{Z}\alpha\cap \Delta=\{\pm\alpha\}$ except for the case when $\alpha$ is an odd non-isotropic root; in this case $\mathbb{Z}\alpha\cap \Delta=\{\pm\alpha;\pm 2\alpha\}$.
This gives 
$$\det S_{\nu}= S^{\ima}_{\nu}\cdot \prod\limits_{(\alpha,m)\in\Xi_0\cup\Xi_1}
H_{m\alpha}^{K(\nu-m\alpha)}\prod\limits_{(\alpha,1)\in\Xi_{\is}}
H_{\alpha}^{K_{\alpha}(\nu-\alpha)}$$
where
$$\begin{array}{rll}
S_{\ima}:=&\prod\limits_{m=1}^{\infty}&\prod\limits_{\alpha\in \Delta^{\ima+}_{\ol{0}}} H_{m\alpha}^{\dim\fg_{\alpha}K(\nu-m\alpha)}\prod\limits_{\alpha\in \Delta^{\ima+}_{\ol{1}}} H_{m\alpha}^{(-1)^{m+1}\dim\fg_{\alpha}K(\nu-m\alpha)}\\
=&\prod\limits_{m=1}^{\infty}&\prod\limits_{\alpha\in \Delta^{\ima+}_{\ol{0}}: \alpha/2\not\in\Delta_{\ol{1}}^{\ima}} H_{m\alpha}^{\dim\fg_{\alpha}K(\nu-m\alpha)}\\
& &\ \ \prod\limits_{\alpha\in \Delta^{\ima+}_{\ol{1}}}   H_{2m\alpha}^{(\dim \fg_{2\alpha}-\dim\fg_{\alpha})
K(\nu-2m\alpha)}
H_{(2m-1)\alpha}^{\dim\fg_{\alpha}
K(\nu-(2m-1)\alpha)}
\end{array}$$
The coefficients of the Kostant partition functions in all the exponents above are positive. The only point that remains to be verified is
$$\dim \fg_{2\gamma}>\dim\fg_{\gamma} \text{ for all }\gamma\in \Delta^{\ima+}_{\ol{1}}.$$ 
Indeed, \Cor{cor:>} gives the inequality in the case when $\fg$
is  anisotropic. If $\fg$ is finite-dimensional,
then $\Delta^{\ima}$ is empty.
By  Hoyt-Serganova classification~\cite{Hoyt},\cite{Sint}
an infinite-dimensional symmetrizable Kac-Moody superalgebra that is 
not anisotropic is affine.  In the latter case, by~\cite{vdLeur}, $\Delta^{\ima}_{\ol{1}}$ is non-empty only if $\fg$ is of type $A(2m|2n)^{(4)}$. In this case, for any $\gamma\in\Delta^{\ima}_{\ol{1}}$, one has
$\dim\fg_\gamma=1$ and
$\dim\fg_{2\gamma}=2m$ if $m=n$, and $\dim\fg_{2\gamma}=m+n+1$ otherwise; see, for example,~\cite{Peleg}. Thus, the inequality holds.

\subsubsection{}
We conclude that  $M(\lambda)$ is irreducible if and only if
 $H_{m\alpha}(\lambda)\not=0$
for all $(\alpha,m)\in \Xi$ and $(\alpha,m)\in\Delta^{\ima+}\times\mathbb{Z}_{>0}$.

 Kac's description of imaginary roots 
(\cite{K78} Proposition 1.6 
extended in~\cite{Shay} to all Kac-Moody 
superalgebras) implies that 
 $\alpha\in\Delta^{\ima+}$
forces $m\alpha \in\Delta^{\ima+}$. This gives the following corollary.

\subsection{}
\begin{cor}{cor:Vermairred}
Let $\fg$ be a symmetrizable Kac-Moody superalgebra.
Then 
 $M(\lambda)$ is irreducible if and only if
 $H_{m\alpha}(\lambda)\not=0$
for all $(\alpha,m)\in \Xi$.
\end{cor}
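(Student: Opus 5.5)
The plan is to start from the conclusion of the preceding subsection: $M(\lambda)$ is irreducible if and only if $H_{m\alpha}(\lambda)\neq 0$ for all $(\alpha,m)\in\Xi$ and for all $(\alpha,m)\in\Delta^{\ima+}\times\mathbb{Z}_{>0}$. This conclusion rests on the factorization of $\det S_\nu$ with positive exponents, which uses \Cor{cor:>} and the affine case $A(2m|2n)^{(4)}$. We also use the fact that, for $\nu$ large, the relevant Kostant partition functions $K(\nu-m\alpha)$ and $K_\alpha(\nu-\alpha)$ are positive, so that every listed factor genuinely occurs in some $\det S_\nu$.

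What remains is to show that the imaginary conditions are redundant: the condition for $(\alpha,m)$ with $\alpha\in\Delta^{\ima+}$ should already be contained in the set of conditions indexed by $\Xi$ together with the conditions $H_{\beta}(\lambda)\neq0$ for $\beta\in\Delta^{\ima+}$. The key step uses the fact, quoted just before the statement, that $\alpha\in\Delta^{\ima+}$ implies $m\alpha\in\Delta^{\ima+}$ for all $m\geq 1$ (\cite{K78}, Proposition 1.6, and \cite{Shay}). The pair $(m\alpha,1)$ is then itself of the form $(\beta,1)$ with $\beta\in\Delta^{\ima+}$, and $H_{m\alpha}$ depends only on the vector $m\alpha$. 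Hence the set of linear factors $\{H_{m\alpha}\mid \alpha\in\Delta^{\ima+},\, m>0\}$ coincides with $\{H_\beta\mid\beta\in\Delta^{\ima+}\}$. The imaginary part of the criterion therefore reduces to $H_\beta(\lambda)\neq 0$, i.e. $2(\lambda+\rho,\beta)\neq(\beta,\beta)$, for $\beta\in\Delta^{\ima+}$.

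The statement as written lists only $\Xi$. I would therefore interpret it as being combined with the imaginary conditions $H_\beta(\lambda)\ne0$, $\beta\in\Delta^{\ima+}$, exactly as in \Cor{cor:1}; alternatively, $\Xi$ is intended to include the pairs $(\beta,1)$ with $\beta\in\Delta^{\ima+}$. With either reading, the argument is just this collapsing of multiples.

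The only subtle point is checking that no cancellation occurs between a factor $H_\beta$ with $\beta$ imaginary and a real-root factor. I expect this to be the main obstacle. Consider coincidences $m\alpha=\beta$ with $\alpha$ real and $\beta$ imaginary. If $m\alpha$ with $\alpha$ real were imaginary, the condition would be harmless. Such a coincidence cannot happen anyway, since $\mathbb{Z}\alpha\cap\Delta\subset\{\pm\alpha,\pm2\alpha\}$ for $\alpha$ real, and $2\alpha$ is not imaginary by convention. Moreover, all exponents in the factorization are nonnegative combinations with positive coefficients, so the vanishing of $\det S_\nu(\lambda)$ for some $\nu$ is equivalent to the vanishing of one of the listed factors. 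This completes the plan.
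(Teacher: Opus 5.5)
Your proposal is correct and follows the paper's own route. The factorization of $\det S_\nu$ with nonnegative exponents (which rests on \Cor{cor:>} and the $A(2m|2n)^{(4)}$ computation) gives irreducibility if and only if $H_{m\alpha}(\lambda)\neq0$ on $\Xi$ and on $\Delta^{\ima+}\times\mathbb{Z}_{>0}$, and the fact that $m\alpha\in\Delta^{\ima+}$ whenever $\alpha\in\Delta^{\ima+}$ collapses the latter to $H_\beta(\lambda)\neq0$ for $\beta\in\Delta^{\ima+}$. You are right that the displayed statement must be read with these imaginary conditions added, as in \Cor{cor:1}; your separate worry about cancellation between real and imaginary factors is unnecessary, since all exponents are already nonnegative.
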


The criterion stated in the Introduction is a reformulation of the above corollary.

\section{Further results on Shapovalov determinants}
Various aspects and generalizations of Shapovalov determinants and Shapovalov elements for Lie superalgebras and related algebras were studied in many papers including~\cite{Shap},\cite{Kconf},\cite{KK},\cite{K86},
\cite{HY}, \cite{LL},\cite{Mus1},\cite{Gcentre} and~\cite{GSq2}.
 In this section, we collect several further results on Shapovalov determinants that apply beyond the symmetrizable case and briefly discuss what is known for the non-symmetrizable affine Kac-Moody superalgebras.

\subsection{Notation}
For $\alpha\in\Delta^{\ree}$ we choose a non-zero vector
$\alpha^{\vee}\in\fg(\alpha)\cap\fh$ and define
$H_{m\alpha}$ by
$$H_{m\alpha}(\lambda):=\langle \lambda+\rho, m\alpha^{\vee}\rangle-
\frac{m^2}{2}\langle \alpha, \alpha^{\vee}\rangle.
$$
In the symmetrizable case, this agrees with our previous notation.

We write
$$\det S_{\nu}= S^{\ima}_{\nu}\cdot \prod\limits_{(\alpha,m)\in\Xi}
H_{m\alpha}^{d_{m\alpha}(\nu)},$$
where $S^{\ima}_{\nu}$ contains none of the irreducible factors $H_{m\alpha}$ with $(\alpha,m)\in\Xi$.

\subsection{Leading term}\label{leading}
For each $\alpha\in\Delta^+$, let $D_{\alpha}$
be the matrix of the pairing $\fg_{\alpha}\times\fg_{-\alpha}\to\fh$
given by $(x,y)\mapsto [x,y]$. 
We regard $\det D_{\alpha}$ as an element of $S(\fh)$.
Like the Shapovalov determinant, $\det D_\alpha$ is understood up to a non-zero scalar multiple. For example,
$\det D_{\alpha}=\alpha^{\vee}$ if $\alpha$ is a real root.
In the symmetrizable case,~\cite{Kbook}, Theorem 2.2, gives
$\det D_{\alpha}=h_{\alpha}^{\dim\fg_{\alpha}}$,
where $h_{\alpha}$ is the image of $\alpha$ under the
isomorphism $\fh^*\iso \fh$ induced by the form $(-,-)$.

The following is~\cite{GSPoisson}, Theorem 4.1; the non-vanishing assumption, which is automatic in the Poisson case considered there, was inadvertently omitted from its statement.

\subsubsection{}
\begin{cor}{cor:leading}
Assume that $\det D_\alpha\neq 0$ for all $\alpha\in\Delta^+$.
Then the leading term of $\det S_\nu$ is
$$
\prod\limits_{\alpha\in\Delta^+}(\det D_{\alpha})^{d'_{\alpha}},\ \ \ 
d'_{\alpha}:=
\sum\limits_{m=1}^{\infty}
(-1)^{(m+1)p(\alpha)}K(\nu-m\alpha).
$$
In particular, $\det S_{\nu}\not=0$ for all $\nu$.
\end{cor}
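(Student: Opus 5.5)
We follow the classical Shapovalov–Kac–Kazhdan strategy, but without using any factorization of $\det S_\nu$. We filter $\cU(\fn^-)_{-\nu}$ by PBW degree and identify the leading term of $\det S_\nu$ with the determinant of the associated graded pairing.

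\emph{Step 1: choose a homogeneous basis.} For each $\alpha\in\Delta^+$ fix a basis $\{e_{\alpha,j}\}$ of $\fg_\alpha$ and a basis $\{f_{\alpha,j}\}$ of $\fg_{-\alpha}$. Fix a total order on the pairs $(\alpha,j)$. The PBW monomials in the $f_{\alpha,j}$ of weight $-\nu$ then form a basis of $\cU(\fn^-)_{-\nu}$, and the monomials in the $e_{\alpha,j}$ form the dual basis of $\cU(\fn)_\nu$. Odd $f$'s appear with exponent at most $1$, except when $2\alpha$ is a root; we handle this by working in the usual PBW basis of the Lie superalgebra, where odd generators have exponent $0$ or $1$.

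The Shapovalov form is $S_\nu(u,v)=\HC(\sigma(u)v)\in S(\fh)$, where $\HC$ is the Harish-Chandra projection. With respect to these bases it becomes a matrix of polynomials on $\fh^*$. Filter $S(\fh)$ by degree.

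\emph{Step 2: compute the top-degree part.} For monomials $u=\prod e_{\alpha,j}^{a}$ and $v=\prod f_{\beta,k}^{b}$ of the same total PBW degree $N$, we claim the degree-$N$ part of $\HC(uv)$ comes only from full pairings of the factors of $u$ with those of $v$ via the brackets $[e_{\alpha,j},f_{\alpha,k}]=(D_\alpha)_{jk}\in\fh$. Every other term of the reordering loses degree. We then show that the leading term of $\det S_\nu$ is the product over PBW "types" of the determinants of these degree-maximal blocks.

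Reordering the basis by types makes the matrix block triangular modulo lower degree. The diagonal block for the type with exponents $(a_{\alpha,j})$ is, up to sign, the symmetric (resp.\ exterior) power of $D_\alpha$ in the even (resp.\ odd) case. Its determinant is a power of $\det D_\alpha$.

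\emph{Step 3: count exponents.} Counting, the total exponent of $\det D_\alpha$ in the product equals $\sum_m (\pm)K(\nu-m\alpha)$, where the signs come from the alternating count of monomials that contain a given odd generator. This is exactly the count encoded by $K_\alpha$ in the paper. Equivalently, for odd $\alpha$ one gets $d'_\alpha=K_\alpha(\nu)$-type sums, which match the stated $d'_\alpha$ with its signs $(-1)^{(m+1)p(\alpha)}$.

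\emph{Step 4: conclude.} Since each $\det D_\alpha\neq0$ by hypothesis and $S(\fh)$ is a domain, the leading term is nonzero. Hence $\det S_\nu\neq0$ for all $\nu$.

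\emph{Main obstacle.} The hardest step is Step 2: proving that the degree-maximal part of the Gram matrix is block triangular with the stated diagonal blocks, carefully tracking super signs. Within Step 2, the delicate point is the odd roots $\alpha$ with $2\alpha\in\Delta$, where $f_\alpha^2$ is a nonzero multiple of an element of $\fg_{-2\alpha}$ rather than an independent monomial. There I would first try choosing the degree filtration on $S(\fh)$ so that $f_{\alpha}$ has degree $1$ and $f_{2\alpha}$ has degree $2$, and check that the exponent bookkeeping still produces $d'_\alpha$.
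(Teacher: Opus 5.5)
Your plan is correct and is essentially the standard leading-term argument behind the result the paper relies on (the paper gives no proof of its own, only the reference to Gorelik--Serganova, Theorem~4.1): one has $\deg\HC(uv)\le\min(\mathrm{len}\,u,\mathrm{len}\,v)$, with the top part given by full pairings. Hence the top-degree Gram matrix is the pairing on the supersymmetric algebra of $\fn^-$ induced by the $D_\alpha$. It splits into blocks $\bigotimes_\alpha S^{n_\alpha}(D_\alpha)$ (exterior powers for odd $\alpha$), whose determinants are products of powers of $\det D_\alpha$, and counting exponents gives $d'_\alpha$.

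Two small corrections. First, the blocks must be indexed by the numbers $n_\alpha$ of factors taken from each $\fg_{-\alpha}$, not by the individual exponents $a_{\alpha,j}$, because $D_\alpha$ need not be diagonal. Second, odd $\alpha$ with $2\alpha\in\Delta$ need no special care, since $f_{2\alpha}$ is just another length-one PBW generator paired through $D_{2\alpha}$. The reweighting you float, giving $f_{2\alpha}$ degree $2$, would in fact make the top-degree component vanish, as already happens for $\mathfrak{osp}(1|2)$ at $\nu=2\alpha$.
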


\subsection{}
\begin{prop}{prop:dmalpha}
Let $\lambda$ be such that
$H_{m\alpha}(\lambda)=0$ for some $(\alpha,m)\in\Xi_0\cup\Xi_1$, and
$H_{n\beta}(\lambda)\neq0$
for all $(\beta,n)\in\Delta^{\ree+}\times\mathbb Z_{>0}$ with $\beta\neq\alpha$.

\begin{enumerate}
\item If $\alpha$ is non-isotropic, then $M(\lambda)$ contains a primitive vector of weight $\lambda-m\alpha$, unique up to a non-zero scalar multiple. This vector generates a submodule isomorphic to $M(\lambda-m\alpha)$.

\item If $\alpha$ is isotropic, then $M(\lambda)$ contains a submodule $N$ such that
$$
(1+e^{-\alpha})\ch N=\ch M(\lambda-\alpha).
$$
\end{enumerate}
\end{prop}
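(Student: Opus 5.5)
The strategy is to reduce, via the skeleton, to the case $\alpha\in\Sigma$ and then use the standard Jantzen-type argument with the Shapovalov determinant. First I would observe that since $\alpha$ is real, $\alpha\in\Sigma'$ for some $\Sigma'\in\Sk$. Passing from $\Sigma$ to $\Sigma'$ along a chain of reflexions $r_\beta$, a Verma module with respect to one Borel is a Verma module with respect to the adjacent one, with highest weight changed by the usual rule: $\lambda\mapsto\lambda$ if $\langle\lambda,\beta^\vee\rangle\ne0$ for $\beta$ isotropic, and $\lambda\mapsto\lambda-\beta$ otherwise. For a non-isotropic $\beta$ one uses $\lambda\mapsto s_\beta$-twisted weights only when $H_{n\beta}(\lambda)\neq0$, i.e. when $M(\lambda)$ has no $\beta$-singular vectors. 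The hypothesis $H_{n\beta}(\lambda)\ne0$ for all real $\beta\ne\alpha$ guarantees exactly these non-degeneracy conditions at every step, because the roots reflected along the chain are real and different from $\alpha$. The expression $\lambda+\rho$ is transported consistently, so $H_{m\alpha}$ is preserved. This gives an identification of $M(\lambda)$ with a Verma module $M'(\lambda')$ for $\Sigma'$ with $H'_{m\alpha}(\lambda')=0$. Primitive vectors of weight $\lambda-m\alpha$ and submodules are intrinsic, so it suffices to treat $\alpha\in\Sigma$.

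For $\alpha=\alpha_i$ simple and non-isotropic, $H_{m\alpha}(\lambda)=0$ means $\langle\lambda,\alpha^\vee\rangle=(m-1)\langle\alpha,\alpha^\vee\rangle/2$. In the even case this says $\langle\lambda,\alpha^\vee\rangle=m-1$ after normalising, and $f_i^m v_\lambda$ is primitive. In the odd case $m$ is odd, and the $\mathfrak{osp}(1|2)$ computation $e_if_i^m v_\lambda=c\,f_i^{m-1}v_\lambda$ shows $c=0$ precisely at this value. Then $e_jf_i^mv_\lambda=0$ for $j\ne i$ trivially, so $v:=f_i^mv_\lambda$ is primitive. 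Since $\cU(\fn^-)$ has no zero divisors (it is a domain up to odd nilpotents; $f_i$ non-isotropic is not nilpotent, and the PBW argument shows $\cU(\fn^-)v\cong\cU(\fn^-)$), the submodule generated by $v$ is isomorphic to $M(\lambda-m\alpha)$. For uniqueness, I would use the Jantzen filtration / sum formula derived from the factorisation of $\det S_\nu$ in \S\ref{Xi01}. Under the hypothesis, the only vanishing factor of $\det S_{m\alpha}$ is $H_{m\alpha}$. Its exponent is $K(0)=1$, or $K_\alpha$ in the isotropic case, once the imaginary factors are shown not to vanish. So the corank of the Shapovalov form on $M(\lambda)_{\lambda-m\alpha}$ is at most $1$, giving uniqueness.

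For $\alpha$ isotropic simple, $H_\alpha(\lambda)=\langle\lambda,\alpha^\vee\rangle=0$. Then $f_iv_\lambda$ is primitive, and since $f_i^2=0$, the submodule $N=\cU(\fn^-)f_iv_\lambda$ is a quotient of $M(\lambda-\alpha)$ killed by $f_i$. A PBW basis with $f_i$ placed last gives $\cU(\fn^-)=\cU(\fn^-)'\oplus\cU(\fn^-)'f_i$ with $\ch\cU(\fn^-)'=\ch\cU(\fn^-)/(1+e^{-\alpha})$. Hence $N\cong\cU(\fn^-)'f_iv_\lambda$, giving $(1+e^{-\alpha})\ch N=\ch M(\lambda-\alpha)$.

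The main obstacle is controlling the imaginary factors in $S^{\ima}_\nu$ and in the general (possibly non-symmetrizable) setting. The proposition does not exclude $H_{n\gamma}(\lambda)=0$ for imaginary $\gamma$, so the uniqueness statement must avoid the determinant. I would therefore prove uniqueness directly at weight $\lambda-m\alpha$. After the reduction, $\alpha$ is simple, and by the reflexion argument the weight space $M(\lambda)_{\lambda-m\alpha}$ is one-dimensional when $\alpha$ is simple, since $K(m\alpha)=1$ for $\alpha\in\Sigma$ with $2\alpha$ handled by parity. This gives uniqueness without any determinant input. The delicate point is verifying the reflexion-chain compatibility of highest weights, which I would check edge by edge.
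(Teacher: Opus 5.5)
There is a genuine gap, and it lies in your reduction to $\alpha\in\Sigma$. A Verma module is carried to a Verma module only across (odd) isotropic reflexions. If $\beta\in\Sigma$ is isotropic and $\langle\lambda+\rho,\beta^\vee\rangle\neq0$, then $M(\lambda)=M_{r_\beta\Sigma}(\lambda')$ with $\lambda'=\lambda-\beta$, i.e.\ $\lambda'+\rho'=\lambda+\rho$. Your rule has the two cases reversed, and when $\langle\lambda,\beta^\vee\rangle=0$ there is no such identification at all.

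Across a non-isotropic reflexion, $M(\lambda)$ is \emph{never} a Verma module for $r_\beta\Sigma=s_\beta\Sigma$, whatever the values of $H_{n\beta}(\lambda)$. The vector $f_\beta$ acts injectively on $M(\lambda)$, yet it is a positive root vector for $s_\beta\Sigma$, so it acts locally nilpotently on every $s_\beta\Sigma$-Verma module. Consequently the skeleton only lets you reach roots lying in a base of the spine $\Sp$. A general real root only satisfies $w\alpha\in\Sigma'$ for some $\Sigma'\in\Sp$ and $w\in W$, and this Weyl-group part is exactly what needs a new idea.

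The paper handles it by induction on $\ell(w)$:
\begin{enumerate}
\item write $w=w_1s_\gamma$ with $\gamma$ principal;
\item pass along isotropic reflexions to a base of $\Sp$ containing $\gamma$;
\item use the hypothesis to get $\langle\lambda+\rho,\gamma^\vee\rangle\notin\mathbb{Z}\langle\gamma^\vee,\gamma\rangle$;
\item transport the statement by Enright functors.
\end{enumerate}
If your ``$s_\beta$-twisted weights'' meant a twisting or Enright functor, then constructing it and checking that it carries the relevant submodules correctly under non-integrality is precisely the missing argument. Note that already for ordinary Kac--Moody algebras, which have no isotropic reflexions, the singular vector for a non-simple real root does not come from a change of Borel.

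Even in the case $\alpha\in\Sigma'\in\Sp$, which the paper treats directly, two steps are missing.
\begin{enumerate}
\item Primitivity is not intrinsic. The $\Sigma'$-primitive vector in $M_{\Sigma'}(\lambda')$ has weight $\lambda'-m\alpha$, and $\lambda'\neq\lambda$ for a non-trivial path. To get a $\Sigma$-primitive vector of weight $\lambda-m\alpha$ generating $M(\lambda-m\alpha)$, you must show that $N=M_{\Sigma'}(\lambda'-m\alpha)$ is again a $\Sigma$-Verma module. That is, you need $\langle\lambda-m\alpha+\rho,\beta_i^\vee\rangle\neq0$ along the path. The paper gets this from $\lambda-m\alpha+\rho=s_\alpha(\lambda+\rho)$ and the hypothesis applied to the isotropic roots $s_\alpha\beta_i$. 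In the isotropic case only characters enter, and your computation does transfer via $\ch M_{\Sigma'}(\lambda')=\ch M(\lambda)$.
\item Your uniqueness argument fails. The dimension $\dim M(\lambda)_{\lambda-m\alpha}=K(m\alpha)$ is computed with respect to the original $\Sigma$ and exceeds $1$ as soon as $\alpha\notin\Sigma$. So one-dimensionality of a weight space of $M_{\Sigma'}(\lambda')$ says nothing about $\Sigma$-primitive vectors of $M(\lambda)$.
\end{enumerate}
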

\begin{proof}
We sketch the proof, omitting some technical details.

We denote by $\vac$ the highest weight vector of $M(\lambda)$.
The {\em spine} of $\fg$ is the set $\Sp$
of all bases of $\Delta$, obtained 
from $\Sigma$ by chains of  reflexions
consisting of isotropic roots.
By~\cite{GHS}, Prop. 4.3.13, $W$ is a Coxeter group 
generated by $\{s_{\gamma}\}_{\gamma\in\pi}$, where
 $$\pi:=\bigl(\cup_{\Sigma'\in \Sp} \Sigma'\bigr)
 \setminus \Delta^{\is};$$
 this set is called the set of principal roots.

 Recall that $\alpha$ is a real root.
By~\cite{GHS}, Cor. 5.2.9, there exists $w\in W$
and $\Sigma'\in\Sp$ such that
 $w\alpha\in \Sigma'$.
for some $w\in W$. We choose $w$ to 
be of the minimal length and prove the assertions by induction on this length.

Consider the case when $w=1$, that is $\alpha\in\Sigma'$
for some $\Sigma'\in\Sp$. We view $\Sp$ as a subgraph of the skeleton 
with the vertices corresponding to bases and the  edges marked by isotropic reflexions.  Choose a base $\Sigma'\in\Sp$ containing $\alpha$ and having minimal distance from $\Sigma$,
and let 
$$\Sigma=\Sigma_0\overset{\beta_1}{\to}\Sigma_{1}\overset{\beta_2}{\to}\ldots
\overset{\beta_s}{\to}\Sigma_{s}=\Sigma'$$
be a  shortest path 
from $\Sigma$ to $\Sigma'$.
If $\Sigma=\Sigma'$, then $\alpha=\alpha_i\in\Sigma$, and $v=f_i^m \vac$
is the highest weight vector, so the assertions hold. 

Let $\Sigma'\not=\Sigma$.
Recall that $\beta_i\in\Sigma_{i-1}$, so $\alpha\not\in\{\beta_i\}_{i=1}^s$.
Moreover,
$$\Delta^{+}(\Sigma_j)=\Delta^+\setminus\{\beta_i\}_{i=1}^j\cup\{-\beta_i\}_{i=1}^j.$$
Since $\alpha\in\Delta^+$ and the chosen path is shortest, we have $\alpha\in\Delta^+(\Sigma_i)$ for $i=1,\ldots,s$. Hence 
$\beta_i\not=\pm\alpha$ and
the assumption on $\lambda$ gives $$\langle\lambda+\rho,\beta_i^{\vee}\rangle\not=0\  \text{
for }i=1,\ldots,s.$$ 
Therefore,
$$
M(\lambda)=M_{\Sigma'}(\lambda'),\ \text{ for }\lambda'+\rho'=\lambda+\rho,
$$
where  $M_{\Sigma'}(\lambda')$ denotes the Verma module of highest weight $\lambda'$ with respect to the triangular decomposition determined by $\Sigma'$.
Since $\alpha\in\Sigma'$,
$M_{\Sigma'}(\lambda')$ contains a $\Sigma'$-primitive vector 
$v'$ of weight $\lambda'-m\alpha$, which is unique up to a multiplication by non-zero scalar. Let $N$ be the submodule
generated by $v'$.

If $\alpha$ is anisotropic, then $N=M_{\Sigma'}(\lambda'-m\alpha)$.
One has
$$\langle \lambda'-m\alpha+\rho',\beta_i^{\vee}\rangle=
\langle \lambda-m\alpha+\rho,\beta_i^{\vee}\rangle=
\langle s_{\alpha}(\lambda+\rho),\beta_i^{\vee}\rangle=
\langle \lambda+\rho,s_{\alpha}\beta_i^{\vee}\rangle
\not=0,$$
since for $\beta:=s_{\alpha}\beta_i\in\Delta^{\is}$
one has $s_{\alpha}\beta_i^{\vee}\in \mathbb{C}^*\beta^{\vee}$, 
by~\ref{reflections}, and $\langle \lambda+\rho,\beta^{\vee}\rangle\not=0$ by the assumptions
(since $H_{1,\beta}(\lambda)\not=0$).

If $\alpha$ is isotropic, then
$$(1+e^{-\alpha})\ch N=\ch M_{\Sigma'}(\lambda'-\alpha)=
e^{-\alpha}
\ch M_{\Sigma'}(\lambda')=e^{-\alpha}
\ch M(\lambda)=\ch M(\lambda-\alpha)$$
as required. This completes the proof for $w=1$.

Now consider the case $l(w)>0$. Then $w=w_1s_{\gamma}$
for $\gamma\in\pi$ and $w_1\in W$ such that $l(w_1)<l(w)$.
Choose a base $\Sigma'\in\Sp$ containing $\gamma$ and having minimal distance from $\Sigma$, and let 
$$\Sigma=\Sigma_0\overset{\beta_1}{\to}\Sigma_{1}\overset{\beta_2}{\to}\ldots
\overset{\beta_s}{\to}\Sigma_{s}=\Sigma'$$
be a  shortest path 
between $\Sigma$ and $\Sigma'$. Since $l(w)>1$, $\alpha\not\in\Sigma''$
for all $\Sigma''\in \Sp$. In particular,
$\beta_i\not=\pm\alpha$ for all $i=1,\ldots,s$ and $\gamma\not=\pm\alpha$.
As above, this gives, $M(\lambda)=M_{\Sigma'}(\lambda')$
where $\lambda+\rho=\lambda'+\rho'$. Moreover, the assumptions
on $\lambda$ imply
$\langle \lambda+\rho,\gamma^{\vee}\rangle\not\in\mathbb{Z}\langle \gamma^{\vee},\gamma\rangle$.
The induction step now follows from the Enright functor arguments of~\cite{GSsnow}, Theorem 3.2 and~\cite{GK25}, Section 5.7.
\end{proof}

\subsubsection{}
\begin{cor}{cor:dmu}
Let $\mathbb{C}\alpha^{\vee}\not=\mathbb{C}\beta^{\vee}$ for all distinct $\alpha,\beta\in\Delta^{\ree+}$.
\begin{enumerate}
\item
The Verma module $M(\lambda)$ is not irreducible if $H_{m\alpha}(\lambda)=0$ for $(\alpha,m)\in \Xi$ (see~\ref{Xi01} for notation).
\item
Every Shapovalov determinant
$\det S_{\nu}$ is divisible by $$\prod\limits_{(\alpha,m)\in\Xi_0\cup\Xi_1}
H_{m\alpha}^{K(\nu-m\alpha)}\prod\limits_{(\alpha,1)\in\Xi_{\is}}
H_{\alpha}^{K_{\alpha}(\nu-\alpha)}.$$
\end{enumerate}
\end{cor}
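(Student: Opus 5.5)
Both parts will be derived from \Prop{prop:dmalpha} by a genericity argument. The hypothesis that the lines $\mathbb{C}\alpha^{\vee}$, $\alpha\in\Delta^{\ree+}$, are pairwise distinct means that the affine hyperplanes $\{H_{m\alpha}=0\}$ and $\{H_{n\beta}=0\}$ with $\beta\neq\alpha$ are non-parallel, hence distinct. For $(\alpha,m)\in\Xi_0\cup\Xi_1$ we will take a generic $\lambda$ on $\{H_{m\alpha}=0\}$, i.e. one avoiding the countably many proper intersections $\{H_{m\alpha}=0\}\cap\{H_{n\beta}=0\}$ with $\beta\ne\alpha$. Such $\lambda$ satisfy the hypotheses of \Prop{prop:dmalpha}. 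For $(\alpha,1)\in\Xi_{\is}$ we use the same genericity together with the isotropic case of the proposition; its proof only uses that $\alpha$ lies in a base of $\Sp$ up to $W$, so it applies to $(\alpha,1)$ as well.

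For (i), the genericity step is done first. \Prop{prop:dmalpha} then produces a proper nonzero submodule of $M(\lambda)$: either $M(\lambda-m\alpha)$, or a submodule $N$ with $(1+e^{-\alpha})\ch N=\ch M(\lambda-\alpha)$. Hence $M(\lambda)$ is reducible for generic $\lambda$ on the hyperplane. To pass to every $\lambda$ on the hyperplane, note that the reducibility locus is the zero set of $\prod_\nu\det S_\nu$, a union of Zariski-closed sets $\{\det S_\nu=0\}$. The hyperplane is irreducible and covered, up to a countable union of proper closed subsets, by these countably many closed sets. By uncountability of $\mathbb C$, some single $\det S_\nu$ vanishes on the whole hyperplane, which gives (i).

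For (ii), we compute the order of vanishing of $\det S_\nu$ along $H_{m\alpha}$ via the Jantzen filtration. Take a generic line $\lambda+t\mu$ crossing the hyperplane transversally at a generic point. The Jantzen sum formula gives $\operatorname{ord}_t\det S_\nu(\lambda+t\mu)=\sum_{j\ge1}\dim M(\lambda)^j_{\lambda-\nu}$, and it suffices to show this is at least $\dim N_{\lambda-\nu}$. Indeed, $\dim N_{\lambda-\nu}$ equals $K(\nu-m\alpha)$ in the non-isotropic case. In the isotropic case it equals $\sum_{i\ge1}(-1)^{i+1}K(\nu-i\alpha)=K_\alpha(\nu)$, which should be matched with the exponent $K_\alpha(\nu-\alpha)$ under the paper's conventions.

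The main obstacle is the inequality $\sum_j\dim M(\lambda)^j_{\lambda-\nu}\ge\dim N_{\lambda-\nu}$, i.e. $N\subset M(\lambda)^1$. This holds because $M(\lambda)^1$ is the maximal submodule, which contains every proper submodule, in particular $N$. Thus $H_{m\alpha}^{\dim N_{\lambda-\nu}}$ divides $\det S_\nu$. Distinct pairs in $\Xi$ give distinct irreducible linear factors: the pairs are distinct by the Notice in~\ref{Xi01}, and the corresponding hyperplanes are distinct by the coroot hypothesis. Therefore the product of all these factors divides $\det S_\nu$, which is (ii).
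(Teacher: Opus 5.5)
Your argument is correct and is essentially the paper's proof made explicit. The paper also uses Zariski density, inside $\{H_{m\alpha}=0\}$, of the $\lambda$ satisfying the hypotheses of \Prop{prop:dmalpha}, followed by the standard passage from the submodule $N$ at such $\lambda$ to divisibility of $\det S_\nu$ by $H_{m\alpha}^{\dim N_{\lambda-\nu}}$; you implement that step via Jantzen's determinant lemma. Your isotropic exponent $\sum_{i\geq 1}(-1)^{i+1}K(\nu-i\alpha)=K_\alpha(\nu)$ is the right one, and the statement's $K_\alpha(\nu-\alpha)$ should be read this way: for $\fgl(1|1)$ and $\nu=2\alpha$ the literal version would force $H_\alpha\mid 1$. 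There is one caveat, the degenerate even isotropic (Heisenberg) case, where all $H_{m\alpha}$, $m\geq 1$, are proportional. Your distinctness claim for fixed $\alpha$ actually rests on $\langle\alpha,\alpha^\vee\rangle\neq 0$, not on the coroot hypothesis, so it fails there, and reaching the exponent $\sum_m K(\nu-m\alpha)$ requires the deeper Jantzen layers rather than just $N\subset M(\lambda)^1$.
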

\begin{proof}
The proof follows the standard arguments based on Zariski density,
in the hyperplane $H_{m\alpha}=0$, of the set of $\lambda$ satisfying the assumptions  of~\Prop{prop:dmalpha}. 
\end{proof}

 \subsubsection{Remark}
The assumption of this corollary holds in symmetrizable case
and for $S(2|1,b)$, but does not hold
for  $\fq_n^{(2)}$.

\subsection{Non-symmetrizable cases}
For Kac-Moody superalgebras of class (IND), it is not known whether $\det S_\nu$ vanishes identically for some $\nu$. To the best of our knowledge, the corresponding question is open even for Kac-Moody algebras of class (IND). 

\subsubsection{Affine case}
Recall that 
the non-symmetrizable affine Kac-Moody superalgebras are $\mathfrak{q}_n^{(2)}$ and $S(2|1,b)$. 
We denote by $\delta$
the minimal imaginary root (the positive imaginary roots are
$i\delta$ for $i\in\mathbb{Z}_{>0}$).

For $\mathfrak{q}_n^{(2)}$, 
the Shapovalov determinants $\det S_{\nu}$ are identically zero 
for $\nu\geq 2\delta+Q^+$, see~\cite{GSq2}.
As a result, there are no irreducible Verma modules.
From the point of view of the preceding results, this algebra is exceptional in two respects: $\det D_{2s\delta}=0$ (while $\det D_\delta\neq0$), and $\alpha^\vee$ and $(\alpha+2\delta)^\vee$ are proportional for every real root $\alpha$. 
Thus, the assumptions of both~\Cor{cor:leading} and~\Cor{cor:dmu}
fail.

The non-symmetrizable affine Kac-Moody superalgebras $S(2|1,b)$
was studied in~\cite{Dhillon}. These algebras
are deformations of $\fsl(2|1)^{(1)}$ and are isomorphic to superconformal 
algebras $N=4$
studied in~\cite{KacLeur},\cite{STP}; the Shapovalov determinants of
were computed in~\cite{KW04}.
In this case, the assumptions of~\Cor{cor:leading}
and~\Cor{cor:dmu} hold; this gives
$$\det S_{\nu}=
\prod\limits_{(\alpha,m)\in\Xi_0\cup\Xi_1}
H_{m\alpha}^{K(\nu-m\alpha)}\prod\limits_{(\alpha,1)\in\Xi_{\is}}
H_{\alpha}^{K_{\alpha}(\nu-\alpha)}\prod\limits_{n=1}^{\infty} Q_{n\delta}^{d_{n\delta}(\nu)}$$
where $Q_{n\delta}\in S(\fh)$ are distinct irreducible quadratic polynomials with $d_{n\delta}(n\delta)=1$   
(and $d_{n\delta}(\nu)=0$ for $\nu\not\in n\delta+Q^+$).
This is consistent with the formula 
obtained in~\cite{KW04}.

\end{document}